\theoremstyle{plain}
\newtheorem{thm}{Theorem}[section]
\newtheorem{lem}[thm]{Lemma}
\newtheorem{prop}[thm]{Proposition}
\newtheorem{defn}{Definition}[section]
\newtheorem{rem}{Remark}[section]
\numberwithin{equation}{section}
\newcommand{\EE}{\mathcal{E}}
\newcommand{\ZZ}{\mathbb Z}
\newcommand{\CC}{\mathbb C}
\newcommand{\RR}{\mathbb R}
\newcommand{\be}{{\bf e}}
\newcommand{\ba}{\vec{a}}
\newcommand{\bb}{\vec{b}}
\newcommand{\bc}{\vec{c}}
\newcommand{\bx}{\vec{x}}
\newcommand{\by}{\vec{y}}
\newcommand{\bz}{\vec{z}}
\newcommand{\PP}{\wp}
\newcommand{\HH}{\mathbb H}
\newcommand{\csp}{\null\hskip 20pt}
\newcommand{\ccsp}{\null\hskip 40pt}
\newcommand{\cccsp}{\null\hskip 80pt}
\newcommand{\ccccsp}{\null\hskip 160pt}
\begin{document}

\title[ELLIPTIC APOSTOL-DEDEKIND SUMS GENERATE DEDEKIND SYMBOLS]
{The elliptic Apostol-Dedekind sums generate odd Dedekind symbols with
Laurent polynomial reciprocity laws}
\author{Shinji Fukuhara}
\subjclass[2000]{Primary 11F20; Secondary 11F11, 33E05}
\keywords{Dedekind sums, reciprocity laws, modular forms,
elliptic functions}
\thanks{\it{Address.} \rm{Department of Mathematics,
  Tsuda College, Tsuda-machi 2-1-1, \\
  Kodaira-shi, Tokyo 187-8577, Japan
  (e-mail: fukuhara@tsuda.ac.jp).}
}

\begin{abstract}
Dedekind symbols are generalizations of the classical Dedekind sums (symbols).
There is a natural isomorphism between the space of Dedekind symbols
with Laurent polynomial reciprocity laws and the space of modular forms.
We will define a new elliptic analogue of the Apostol-Dedekind sums. Then
we will show that
the newly defined sums generate all odd Dedekind symbols with Laurent polynomial
reciprocity laws.
Our construction is based on Machide's result \cite{MA1} on his elliptic
Dedekind-Rademacher sums.
As an application of our results, we discover Eisenstein series identities
which generalize certain formulas by Ramanujan\cite{RA1},
van der Pol \cite{PO1}, Rankin\cite{RA2}
and Skoruppa \cite{SK1}.
\end{abstract}

\maketitle

\section{Introduction and statement of results}\label{sect1}

A {\em Dedekind symbol} is a generalization
of the classical Dedekind sums (\cite{RG1}),
and is defined
as a complex valued function $D$ on
  $V:=\{(p,q)\in \ZZ^+\times \ZZ\,|\,\gcd(p,q)=1\}$
satisfying
\begin{equation}\label{eqn1.1}
  D(p,q)=D(p,q+p).
\end{equation}
The symbol $D$ is determined uniquely by its {\em reciprocity law}:
\begin{equation}\label{eqn1.2}
  D(p,q)-D(q,-p)=R(p,q)
\end{equation}
up to an additive constant.
The function $R$ is defined on
  $U:=\{(p,q)\in \ZZ^+\times \ZZ^+\,|\,\gcd(p,q)=1\},$
and is called a {\em reciprocity function} associated with the
Dedekind symbol $D$.
The function $R$ necessarily satisfies
the equation:
\begin{equation}\label{eqn1.3}
  R(p+q,q)+R(p,p+q)=R(p,q).
\end{equation}
When the reciprocity function $R$ is a Laurent polynomial in $p$ and $q$,
the symbol $D$ is called a {\em Dedekind symbol with Laurent polynomial
reciprocity law}. These symbols are particularly important because
they naturally correspond to modular forms (\cite{FU1}).
The symbol $D$ is said to be {\em even} (resp. {\em odd}) if $D$
satisfies:
\begin{equation}\label{eqn1.4}
  D(p,-q)=D(p,q)\ \ \ (\text{resp.}\ D(p,-q)=-D(p,q)).
\end{equation}

To state our results, we need to review the relevant relationship between
modular forms, Dedekind symbols and period polynomials
(see \cite{FU1} for details).
Throughout the paper,
we assume that $w$ is an {\em even} positive integer,
and we use the following notation:
\begin{align*}
  M_{w+2}&:=
    \text{the vector space of modular forms on $SL_2(\ZZ)$ with weight $w+2$}, \\
  S_{w+2}&:=
    \text{the vector space of cusp forms on $SL_2(\ZZ)$ with weight $w+2$}, \\
  d_w&:=\begin{cases}
       \left[\frac{w+2}{12}\right]-1
         & \mathrm{if\ \ \ } w\equiv 0 \pmod {12} \\
       \left[\frac{w+2}{12}\right]
         & \mathrm{if\ \ \ } w\not\equiv 0 \pmod {12}
       \end{cases}
\end{align*}
where $[x]$ denotes the greatest integer not exceeding $x\in\RR$.
We note that
$$\dim S_{w+2}=d_w \text{\ \ \ and\ \ \ } \dim  M_{w+2}=d_w+1.$$
Let $B_k$ denote the $k$th Bernoulli number,
and let $g_w$ be a Laurent polynomial in $p$ and $q$ defined by
  $$g_w(p,q):=
    -\frac{1}{pq}
    \left\{
    \sum^{\frac{w}{2}+1}_{j=0}
    \frac{w!B_{2j}B_{w+2-2j}}{2(2j)!(w+2-2j)!}p^{2j}q^{w+2-2j}
    +\frac{B_{w+2}}{2(w+2)}
    \right\}.$$
We also use the following notation:
{\allowdisplaybreaks
  \begin{align*}
  \mathcal{V}_w^-&:=\{g|\
  \text{$g$ is an odd homogeneous polynomial in $p$ and $q$ of degree $w$ which}
  \\
  &\ccsp
    \text{\ \ \ satisfies $g(p+q,q)+g(p,p+q)=g(p,q)$ and $g(p,q)=g(q,p)$}\} \\
  &\ccsp\csp
    \text{\ \ (an element of $\mathcal{V}_w^-$ is essentially
    an odd period polynomial)},
  \\
  \mathcal{W}_w^-&:=\mathcal{V}_w^-\oplus \CC(g_w)
  \text{\ \ \ ($\CC(g_w)$ is the vector space spanned by $g_w$)},
  \\
  \mathcal{D}_w^-&:=\{D\,|\
  \text{$D$ is an odd Dedekind symbol
    such that $D(p,q)-D(q,-p)\in\mathcal{W}_w^-$}\}.
  \end{align*}
}

First we will see that the three spaces $M_{w+2}$, $\mathcal{D}_w^-$
and $\mathcal{W}_w^-$ are naturally isomorphic.
For a cusp form $f\in S_{w+2}$ and $(p,q)\in V$,
we define $D_f$ and $D_f^-$ by
\begin{equation*}
  D_f(p,q):=\int_{q/p}^{i\infty}f(z)(pz-q)^{w}dz, \ \ \
  D_f^-(p,q):=\frac{1}{2}\{D_f(p,q)-D_f(p,-q)\}.
\end{equation*}
Then we can show $D_f^-$ is an odd  Dedekind symbol in $\mathcal{D}_w^-$
($D_f^-$ can be similarly defined for $f\in M_{w+2}$, see \cite{FU1}
for further details).
Hence we can define a map
\begin{equation*}
  \alpha_{w+2}^-:M_{w+2}\to\mathcal{D}_w^-
\end{equation*}
by
\begin{equation*}
  \alpha_{w+2}^-(f)=D_f^-.
\end{equation*}
Next we define a map
\begin{equation*}
  \beta_{w}^-:\mathcal{D}_w^-\to\mathcal{W}_w^-
\end{equation*}
by
  \begin{equation*}
    \beta_{w}^-(D)(p,q)=D(p,q)-D(q,-p).
  \end{equation*}
In other words, $\beta_{w}^-(D)$ is the reciprocity function of
the Dedekind symbol $D$.

It was shown in \cite{FU1} that these two maps $\alpha_{w+2}^-$ and
$\beta_{w}^-$ are isomorphisms and
$\beta_{w}^-\alpha_{w+2}^-$ can be identified with the Eichler-Shimura
isomorphism. Indeed
$\beta_{w}^-\alpha_{w+2}^-(f)(p,q)$
gives the (homogeneous) odd period polynomial of $f$.

These facts may be summarized in the following commutative diagram:

\setlength{\unitlength}{1mm}
\begin{picture}(120,50)(4,-1)
  \put(44,35){\framebox(39,10)
    {\shortstack{the space $M_{w+2}$ of \\
      modular forms}}
  }
  \put(0,5){\framebox(60,15)
    {\shortstack{the space $\mathcal{D}_w^-$ of odd \\
                 Dedekind symbols with Laurent \\
                 polynomial reciprocity laws}}
  }
  \put(77,5){\framebox(45,15)
    {\shortstack{the space $\mathcal{W}_w^-$ of \\
                 odd period Laurent \\
                 polynomials}}
  }
  \put(44,33){\vector(-1,-1){10}}
  \put(28,27){\makebox(10,5){$\alpha_{w+2}^-$}}
  \put(38,25){\makebox(10,5){$\cong$}}
  \put(84,33){\vector(1,-1){10}}
  \put(97,22){\makebox(10,15)
    {\shortstack{the Eichler-Shimura \\
                 isomorphism}}
  }
  \put(80,25){\makebox(10,5){$\cong$}}
  \put(66,12){\vector(1,0){7}}
  \put(64,14){\makebox(10,5){$\beta_{w}^-$}}
  \put(64,6){\makebox(10,5){$\cong$}}
\end{picture}

Next we also need to recall the generalized Dedekind sum defined by
Apostol \cite{AP1}.
The first Dedekind symbol, after the classical Dedekind sum, was
given by Apostol, which we call the Apostol-Dedekind sum
to distinguish it from other generalized Dedekind sums.
Let $k$ be an positive integer, and let $(p,q)$ be in $V$.
The Apostol-Dedekind sum $s_k(q,p)$ is defined by
\begin{equation*}
  s_k(q,p):=\sum_{\mu=1}^{p-1}\frac{\mu}{p}\bar{B}_{k}(\frac{\mu q}{p}).
\end{equation*}
Here $\bar{B}_k(x)$ denotes the $k$th Bernoulli
function.
That is, $\bar{B}_k(x)$ is given by the Fourier expansion
  $$\bar{B}_k(x):=-k!
      \sum^{+\infty}_{\substack{m=-\infty\\ m\ne 0}}
      \frac{e^{2\pi imx}}{(2\pi im)^{k}}.$$
It is well-known that for $0\leq x<1$, $\bar{B}_k(x)$ reduces
to the $k$th Bernoulli polynomial $B_k(x)$.

If $k$ is even, it is easy to see that $s_k(q,p)=0$.
If $k$ is odd, a reciprocity law for the Apostol-Dedekind sums
was obtained by Apostol \cite[p.149]{AP1}:
\begin{equation}\label{eqn1.5}
  p^{w}s_{w+1}(q,p)+q^{w}s_{w+1}(p,q)
  =-2(w+1)g_w(p,q).
\end{equation}

In \cite{FY1} we have proposed an elliptic analogue of
Apostol-Dedekind sums, say $\tilde{s}_{w+1}(q,p;\tau)$. Here
$\tau\in \HH:=\{z\in\CC\ |\ \Im z>0\}$.
These sums satisfy
$$  \lim_{\tau\to i\infty}\tilde{s}_{w+1}(q,p;\tau)=
  s_{w+1}(q,p).$$
However, they have two defects:

(1) they are not real Dedekind symbols, instead they satisfy
$$\tilde{s}_{w+1}(q+2p,p;\tau)=\tilde{s}_{w+1}(q,p;\tau),$$

(2) they are defined in two different ways depending on the parity condition
of $p$ and $q$.

To rectify these defects, we introduce a new kind of
the elliptic Apostol-Dedekind sum.

In what follows, $\sigma(z;\tau)$, $\PP(z;\tau)$ and
$\zeta(z;\tau)$ denote
the Weierstrass sigma, pe and zeta functions,
and $\zeta^{(k)}(z;\tau)$  denotes the $k$th derivative
${\partial^k \zeta(z;\tau)}/{\partial z^k}$
of $\zeta(z;\tau)$.
Furthermore $E_k(\tau)$ denotes the $k$th Eisenstein series
(details of these functions will be given in the section \ref{sect4}).
\begin{defn}\label{defn1.1}
For $(p,q)\in V$, $\tau\in\HH$ and a positive integer $n$,
we define
\begin{align*}
  D_{2n}^-(p,q;\tau):=&\frac{1}{(2\pi i)^2p(2n)!}
    \sum_{\substack{\lambda,\mu=0\\ (\lambda,\mu)\ne (0,0)}}^{p-1}
    \zeta^{(2n)}\left(
    \frac{\lambda+\mu\tau}{p};\tau\right) \\
  &\times\left[
    \zeta\left(
    \frac{q(\lambda+\mu\tau)}{p};\tau\right)
  -E_2(\tau)\frac{q(\lambda+\mu\tau)}{p}
    +2\pi i\frac{q\mu}{p}\right].
\end{align*}
We call $D_{2n}^-(p,q;\tau)$ the elliptic Apostol-Dedekind sum.

For $(p,q)\in U$, $\tau\in\HH$ and a positive integer $n$,
we also define
\begin{align*}
  R_{2n}^-(p,q;&\tau) \\
  :=&-\frac{1}{(2\pi i)^2pq}\Bigg[
        \sum_{j=1}^{n}E_{2j}(\tau)E_{2n+2-2j}(\tau)
        p^{2j}q^{2n+2-2j} \\
        &\ccsp\csp-E_{2n+2}(\tau)(p^{2n+2}+q^{2n+2})
        -(2n+1)E_{2n+2}(\tau)
     \Bigg] \\
  &-\frac{1}{4\pi in}\frac{\partial E_{2n}(\tau)}{\partial \tau}
    (p^{2n-1}q+pq^{2n-1}).
\end{align*}
\end{defn}

Then this sum $D_{2n}^-(p,q;\tau)$ is an odd Dedekind symbol and expressed
without regard to the parities of $p$ and $q$. Furthermore, this sum
is equipped with Laurent polynomial reciprocity law.
We will formulate these findings more precisely as a theorem.
\begin{thm}\label{thm1.1}
\begin{enumerate}
\item
For $(p,q)\in V$, $\tau\in\HH$ and a positive integer $n$,
it holds that
  $$D_{2n}^-(p,q;\tau)=D_{2n}^-(p,q+p;\tau),\ \
   D_{2n}^-(p,-q;\tau)=-D_{2n}^-(p,q;\tau).
  $$
\item
For $(p,q)\in U$, $\tau\in\HH$ and a positive integer $n$,
$D_{2n}^-(p,q;\tau)$ satisfies the following reciprocity law:
  $$D_{2n}^-(p,q;\tau)+D_{2n}^-(q,p;\tau)
    =R_{2n}^-(p,q;\tau).
  $$
\end{enumerate}
\end{thm}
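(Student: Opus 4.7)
My plan is to handle (1) directly from the quasi-periodicity and oddness of the Weierstrass $\zeta$ function, and to reduce (2) to the reciprocity for Machide's elliptic Dedekind-Rademacher sums established in \cite{MA1}, which is explicitly flagged as the foundation of the construction.

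For (1), I would prove the periodicity $D_{2n}^-(p,q+p;\tau)=D_{2n}^-(p,q;\tau)$ by direct substitution: shifting $q\mapsto q+p$ translates the argument $q(\lambda+\mu\tau)/p$ by the lattice element $\lambda+\mu\tau$. Using the quasi-periodicities
$$\zeta(z+1;\tau)=\zeta(z;\tau)+\eta_1(\tau),\qquad \zeta(z+\tau;\tau)=\zeta(z;\tau)+\eta_2(\tau),$$
together with the normalization $\eta_1(\tau)=E_2(\tau)$ used in Section~\ref{sect4} and the Legendre relation $\eta_1\tau-\eta_2=2\pi i$, the change in $\zeta(q(\lambda+\mu\tau)/p;\tau)$ is cancelled exactly by the changes in $-E_2(\tau)q(\lambda+\mu\tau)/p$ and $2\pi iq\mu/p$, making the bracket invariant termwise. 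The oddness $D_{2n}^-(p,-q;\tau)=-D_{2n}^-(p,q;\tau)$ is then immediate: $q\mapsto -q$ flips the sign of each of the three bracketed summands (because $\zeta$ is odd) while the factor $\zeta^{(2n)}((\lambda+\mu\tau)/p;\tau)$ is untouched.

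For (2), I would identify the symmetrization $D_{2n}^-(p,q;\tau)+D_{2n}^-(q,p;\tau)$ as an instance of Machide's reciprocity. Expanding $\zeta^{(2n)}$ and $\zeta$ in their Laurent/Fourier forms and summing over the $p$-torsion of $\CC/(\ZZ+\ZZ\tau)$ converts the double sum into expressions in Eisenstein series; Machide's reciprocity in \cite{MA1} then says the $(p,q)$-symmetric combination is a polynomial in $p,q$ whose coefficients are the convolution $\sum_{j=1}^n E_{2j}E_{2n+2-2j}p^{2j}q^{2n+2-2j}$, the boundary terms $-E_{2n+2}(p^{2n+2}+q^{2n+2})$, and the constant $-(2n+1)E_{2n+2}$. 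The quasi-modular corrections built into the definition (the $E_2$-subtraction and the $2\pi iq\mu/p$-term), after a further application of the Legendre relation, combine with the Ramanujan-type identity for $\partial E_{2n}/\partial\tau$ to produce the remaining $-\frac{1}{4\pi in}\frac{\partial E_{2n}}{\partial\tau}(p^{2n-1}q+pq^{2n-1})$ term.

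The main obstacle is the bookkeeping in this last step: tracking the cross-pairings between the doubly-indexed sums under symmetrization, accounting for the ``missing'' index $(\lambda,\mu)=(0,0)$ that contributes the constant $(2n+1)E_{2n+2}$, and verifying that the mixed $E_2\cdot E_{2n}$-products reorganize precisely into $\partial E_{2n}/\partial\tau$ with no residual quasi-modular remainder. Once this matching is complete, the reciprocity is in hand, and the right-hand side is visibly a Laurent polynomial in $p$ and $q$.
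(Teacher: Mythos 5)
Your part (1) is correct and is essentially the paper's argument: the paper proves the same two identities for the generating function $D^-(p,q;\tau;x)$ using \eqref{eqn4.1} and the oddness of $\zeta(z;\tau)$, which is exactly your computation (the shift by the lattice point $\lambda+\mu\tau$ changes $\zeta$ by $E_2(\tau)(\lambda+\mu\tau)-2\pi i\mu$, which is cancelled by the two correction terms).

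Part (2), however, has a genuine gap rather than mere bookkeeping. You correctly name Machide's reciprocity as the engine, but Theorem \ref{thm2.1} cannot be applied directly to the specialization that would produce $D_{2n}^-(p,q;\tau)+D_{2n}^-(q,p;\tau)$: the hypotheses \eqref{eqn2.3} require the shift parameters to avoid certain lattice conditions, and the specialization you need (all shifts zero, so that the sums run over exact $p$- and $q$-torsion including the problematic indices) violates them --- indeed $B_1(0,0;\tau)$ is not even defined. The paper's entire strategy exists to get around this: it introduces the generating parameter $x$ (Definition \ref{defn1.2}), deforms further by a second parameter $t$ in the shifts $\bx=(s,0)$, $\by=(pt,0)$, $\bz=(-qt,0)$, extracts only the low-order coefficients $S_{0,2}$, $S_{2,0}$, $S_{1,1}$ from Machide's identity (Lemma \ref{lem3.2}), and then performs a delicate limit $t\to 0$ in which the singular $(\lambda,\mu)=(0,0)$ contributions of two of the three cyclic terms cancel against each other and leave the finite part $\frac{1}{2\pi ipq}\partial B_1(s,0;\tau)/\partial s$ (Proposition \ref{prop3.1}); Theorem \ref{thm1.1}(2) is then obtained by Taylor-expanding in $x$. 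Your plan instead works at fixed degree $2n$, which would force you to extract the coefficient of a high power of Machide's formal variables (to reach $B_{2n+1}\cdot B_1$ sums) and to regularize the degenerate shifts by some unspecified device; neither step is supplied. Moreover, your proposed mechanism for the term $-\frac{1}{4\pi in}\frac{\partial E_{2n}}{\partial\tau}(p^{2n-1}q+pq^{2n-1})$ --- reorganizing mixed $E_2\cdot E_{2n}$ products via the Ramanujan identity --- is not where that term comes from: in the paper it arises from the collapsed boundary sums $B_2(px,0;\tau)$ and $B_2(qx,0;\tau)$, whose $q$-expansion produces $\partial E_{2n}/\partial\tau$ directly (Lemma \ref{lem4.1}, identity \eqref{eqn4.4}); the Ramanujan/van der Pol/Rankin identity is an \emph{output} of the construction (Section \ref{sect7}), not an input. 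As written, part (2) asserts the conclusion and defers the only nontrivial steps.
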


The sum has the following property:
\begin{equation}\label{eqn1.6}
  \lim_{\tau\to i\infty}D_{2n}^-(p,q;\tau)=
  -\frac{(2\pi i)^{2n}}{(2n+1)!}p^{2n}s_{2n+1}(q,p).
\end{equation}
This means that $D_{2n}^-(p,q;\tau)$ is an elliptic analogue
of Apostol-Dedekind sums.

The most striking feature of the newly defined sum is
that the sum ``generates''
all odd Dedekind symbols with Laurent polynomial reciprocity laws.
\begin{thm}\label{thm1.2}
There are $\tau_0,\tau_1,\ldots,\tau_{d_w}\in\HH$ such that
$D_{w}^-(p,q;\tau_i)\ (i=0,1,\ldots,d_w)$ form a basis of
the space $\mathcal{D}_w^-$ of
odd Dedekind symbols with Laurent polynomial reciprocity laws.
\end{thm}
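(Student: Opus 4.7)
The plan is to transport the statement through the isomorphism $\beta_w^-:\mathcal{D}_w^-\to\mathcal{W}_w^-$ and prove it on the $\mathcal{W}_w^-$ side, where the objects are Laurent polynomials of bounded degree and the problem becomes essentially linear-algebraic. Since Theorem \ref{thm1.1}(2) gives $\beta_w^-(D_w^-(\cdot,\cdot;\tau))=R_w^-(\cdot,\cdot;\tau)$, the tuple $D_w^-(\cdot,\cdot;\tau_0),\ldots,D_w^-(\cdot,\cdot;\tau_{d_w})$ is a basis of $\mathcal{D}_w^-$ if and only if $R_w^-(\cdot,\cdot;\tau_0),\ldots,R_w^-(\cdot,\cdot;\tau_{d_w})$ is a basis of $\mathcal{W}_w^-$. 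Because $\dim_\CC\mathcal{W}_w^-=d_w+1$, it suffices to show that $W:=\mathrm{span}_\CC\{R_w^-(\cdot,\cdot;\tau):\tau\in\HH\}$ equals all of $\mathcal{W}_w^-$; once that is in hand, an inductive selection of $\tau_0,\ldots,\tau_{d_w}$ in $\HH$, each chosen so that $R_w^-(\cdot,\cdot;\tau_i)$ extends the span of the previous ones, yields the required basis.

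I would split the assertion $W=\mathcal{W}_w^-$ along the decomposition $\mathcal{W}_w^-=\mathcal{V}_w^-\oplus\CC(g_w)$. Since $W$ is a subspace of a finite-dimensional space it is automatically closed. For the $\CC(g_w)$ summand, take $\tau\to i\infty$: using $E_{2k}(\tau)\to 1$ for $k\ge 1$ and $\partial_\tau E_w(\tau)\to 0$, and applying the reciprocity identity in Theorem \ref{thm1.1}(2) together with the limit formula (\ref{eqn1.6}) and Apostol's classical reciprocity (\ref{eqn1.5}), one computes
\begin{equation*}
\lim_{\tau\to i\infty}R_w^-(p,q;\tau)=\frac{2(2\pi i)^{w}}{w!}\,g_w(p,q),
\end{equation*}
which is a nonzero scalar multiple of $g_w$. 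Hence $g_w\in W$.

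For the $\mathcal{V}_w^-$ summand I would use the Fourier expansion in $q=e^{2\pi i\tau}$. Each coefficient of $R_w^-(p,q;\tau)$, viewed as a Laurent polynomial in $p$ and $q$, is a fixed $\CC$-linear combination of the functions $E_{2j}(\tau)E_{w+2-2j}(\tau)$, $E_{w+2}(\tau)$ and $\partial_\tau E_w(\tau)$; expanding these in $q$ and regrouping produces
\begin{equation*}
R_w^-(p,q;\tau)=\sum_{m\ge 0}r_m(p,q)\,e^{2\pi im\tau}.
\end{equation*}
Because the linear conditions cutting out $\mathcal{W}_w^-$ from the space of Laurent polynomials are independent of $\tau$, each $r_m(p,q)$ again lies in $\mathcal{W}_w^-$, and a Cauchy integral along a horizontal circle in $\HH$ exhibits $r_m$ as a limit of finite linear combinations of $R_w^-(\cdot,\cdot;\tau)$, so $r_m\in W$ for every $m\ge 0$. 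Via the Eichler-Shimura isomorphism $\beta_w^-\alpha_{w+2}^-$, the $r_m$ correspond to odd period polynomials of specific weight-$w+2$ modular forms built from products of Eisenstein series.

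The main obstacle is to conclude that the $\mathcal{V}_w^-$-components of the $r_m$ exhaust $\mathcal{V}_w^-$, or equivalently that no nontrivial linear functional on $\mathcal{V}_w^-$ kills the cuspidal parts of the Fourier coefficients of $E_{2j}E_{w+2-2j}$, $E_{w+2}$ and $\partial_\tau E_w$. This is the substantive piece, and it is where Machide's theorem \cite{MA1} on his elliptic Dedekind-Rademacher sums must enter; our construction is explicitly modeled on that theorem, and the modular-form identities it provides should directly rule out such a functional. Once this spanning is established, combining it with the $g_w$ contribution from the previous paragraph proves $W=\mathcal{W}_w^-$, which together with the inductive choice of $\tau_i$ described in the first paragraph finishes the proof.
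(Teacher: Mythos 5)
Your reduction through $\beta_w^-$ is sound and matches the paper's strategy: it suffices to show that suitable $R_w^-(\cdot,\cdot;\tau_i)$ form a basis of $\mathcal{W}_w^-$, and the $g_w$-component is indeed captured by the limit $\lim_{\tau\to i\infty}R_w^-(p,q;\tau)=\frac{2(2\pi i)^w}{w!}g_w(p,q)$ (Remark \ref{rem5.1}). But the step you yourself flag as ``the substantive piece'' --- that no nonzero linear functional on $\mathcal{V}_w^-$ annihilates the cuspidal parts of the coefficients of $R_w^-$ --- is left unproved, and the tool you point to is the wrong one: Machide's theorem enters this paper only to establish the reciprocity law (Proposition \ref{prop3.1} and hence Theorems \ref{thm1.3} and \ref{thm1.1}); it says nothing about which period polynomials the cuspidal projections of $E_{2j}E_{w+2-2j}$ realize. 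So the proposal has a genuine gap exactly at the point where all the work lies.

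What actually closes the gap in the paper is Lemma \ref{lem6.1}, whose engine is Rankin's identity \eqref{eqn6.2} (in the form of Kohnen--Zagier): for a normalized eigenform $f$,
\begin{equation*}
\Bigl(f,\;G_{2j}G_{2n+2-2j}+(\delta_{j,1}+\delta_{j,n})\tfrac{1}{8\pi in}\tfrac{\partial G_{2n}}{\partial\tau}\Bigr)=\tfrac{1}{(2i)^{2n+1}}\,r_{2n}(f)\,r_{2j-1}(f).
\end{equation*}
This converts the coefficient-by-coefficient question you pose into the closed form \eqref{eqn6.4},
\begin{equation*}
R_{2n}^-(p,q;\tau)=-\frac{2i\pi^{2n}}{(2n)!}\sum_{j=0}^{d_w}\frac{r_{2n}(f_j)\,r^-(f_j)(p,q)}{(f_j,f_j)}\,f_j(\tau),
\end{equation*}
with $f_0=G_{2n+2}$ and $f_1,\dots,f_{d_w}$ the normalized eigenforms. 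From here the spanning is immediate: $\{r^-(f_j)\}_{j=0}^{d_w}$ is a basis of $\mathcal{W}_w^-$ by the Eichler--Shimura isomorphism, the critical values $r_{2n}(f_j)$ are nonzero, and linear independence of $f_0,\dots,f_{d_w}$ in $M_{w+2}$ lets one pick $\tau_0,\dots,\tau_{d_w}$ with $\det(f_j(\tau_i))\ne 0$. Your inductive selection of the $\tau_i$ is equivalent to this last step, but without the Rankin identity (or some substitute establishing that the cuspidal projections of the $G_{2j}G_{2n+2-2j}$ pair nontrivially against every eigenform via its odd periods) the argument does not go through.
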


To prove Theorem \ref{thm1.1}, it is convenient to introduce
the generating functions of $D_{2n}^-(p,q;\tau)$ and $R_{2n}^-(p,q;\tau)$.
\begin{defn}\label{defn1.2}
For $(p,q)\in V$, $\tau\in\HH$ and $x\in\RR$, we define
\begin{equation*}
\begin{split}
  D^-(p,q;&\tau;x) \\
  :=&\frac{1}{(2\pi i)^2p}
    \sum_{\substack{\lambda,\mu=0\\ (\lambda,\mu)\ne (0,0)}}^{p-1}
  \left[
    \zeta\left(
    \frac{\lambda+\mu\tau}{p}-x;\tau\right)
  -E_2(\tau)\left(\frac{\lambda+\mu\tau}{p}-x\right)
    +2\pi i\frac{\mu}{p}\right] \\
   &\cccsp\times\left[
    \zeta\left(
    \frac{q(\lambda+\mu\tau)}{p};\tau\right)
  -E_2(\tau)\frac{q(\lambda+\mu\tau)}{p}
    +2\pi i\frac{q\mu}{p}\right]. \\
\end{split}
\end{equation*}

For $(p,q)\in U$, $\tau\in\HH$ and $x\in\RR$, we define
\begin{equation*}
\begin{split}
  R^-(p,q;\tau;x):=&-\frac{1}{(2\pi i)^2}
     \left[
      \zeta(px;\tau)-E_2(\tau)(px)
     \right]
     \left[
      \zeta(qx;\tau)-E_2(\tau)(qx)
     \right] \\
  &+\frac{q}{4\pi ip}
    \left[
    2\frac{\partial\log\sigma(px;\tau)}{\partial \tau}
    -\frac{\partial E_2(\tau)}{\partial \tau}(px)^2
    -\frac{1}{\pi i}E_2(\tau)
    \right] \\
  &+\frac{p}{4\pi iq}
    \left[
    2\frac{\partial\log\sigma(qx;\tau)}{\partial \tau}
    -\frac{\partial E_2(\tau)}{\partial \tau}(qx)^2
    -\frac{1}{\pi i}E_2(\tau)
    \right] \\
  &+\frac{1}{(2\pi i)^2pq}
    \left[
    \PP(x;\tau)+E_2(\tau)
    \right].\\
\end{split}
\end{equation*}
\end{defn}

Then we know that $D^-(p,q;\tau;x)$ and $R^-(p,q;\tau;x)$
are generating functions of
$D_{2n}^-(p,q;\tau)$ and $R_{2n}^-(p,q;\tau)$, respectively.
Our strategy of establishing Theorem \ref{thm1.1} is first to prove
Theorem \ref{thm1.3}, and then derive the assertion of Theorem \ref{thm1.1}
as its corollary.
\begin{thm}\label{thm1.3}
\begin{enumerate}
\item
For $(p,q)\in V$, $\tau\in\HH$ and $x\in\RR$, it holds that
  $$D^-(p,q;\tau;x)=D^-(p,q+p;\tau;x),\ \
   D^-(p,-q;\tau;x)=-D^-(p,q;\tau;x).
  $$
\item
For $(p,q)\in U$, $\tau\in\HH$ and
a sufficiently small real number $x\ne 0$,
$D^-(p,q;\tau;x)$ satisfies the following reciprocity law:
  $$D^-(p,q;\tau;x)+D^-(q,p;\tau;x)=R^-(p,q;\tau;x)+C(\tau)
  $$
\end{enumerate}
where $C(\tau)$ is a constant with respect to $x$.
\end{thm}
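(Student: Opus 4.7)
Proposal.

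Part (1). Introduce the modified zeta $f(z;\tau) := \zeta(z;\tau) - E_2(\tau) z$. The Weierstrass quasi-periodicities, in the normalization used in the paper (where $\eta_1 = E_2(\tau)$ and, by the Legendre relation, $\eta_2 = E_2(\tau)\tau - 2\pi i$), translate into
\[
f(z+1;\tau) = f(z;\tau), \qquad f(z+\tau;\tau) = f(z;\tau) - 2\pi i,
\]
together with the oddness $f(-z;\tau) = -f(z;\tau)$. The summand of $D^-(p,q;\tau;x)$ factors as $A_1\cdot A_2$, where $A_2 := f(q(\lambda+\mu\tau)/p;\tau) + 2\pi i q\mu/p$ depends on $q$ but not on $x$, and $A_1$ is independent of $q$. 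Under $q\mapsto q+p$, the argument of $f$ in $A_2$ shifts by $\lambda+\mu\tau$, producing a defect $-2\pi i\mu$ that is exactly cancelled by the increment $+2\pi i\mu$ in the linear term; under $q \mapsto -q$, oddness of $f$ flips the sign of $A_2$. Summing over $(\lambda,\mu)\ne(0,0)$ gives both assertions of (1).

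Part (2). Set $\Delta(x) := D^-(p,q;\tau;x) + D^-(q,p;\tau;x) - R^-(p,q;\tau;x)$. The plan is to prove that $\Delta$, regarded as a meromorphic function of $x$, is in fact entire on $\CC/(\ZZ+\ZZ\tau)$ and doubly periodic, hence $\Delta \equiv C(\tau)$ by Liouville. \emph{Pole cancellation}: at a nonzero $p$-torsion point $x_0 = (\lambda+\mu\tau)/p$, the residue of $D^-(p,q;\tau;x)$ coming from the $(\lambda,\mu)$-th summand is $-\frac{1}{(2\pi i)^2 p}\bigl[f(qx_0;\tau) + 2\pi i q\mu/p\bigr]$; the same residue is produced by $R^-$, with the product $-\frac{1}{(2\pi i)^2}[\zeta(px;\tau) - E_2 px][\zeta(qx;\tau) - E_2 qx]$ contributing $-\frac{f(qx_0;\tau)}{(2\pi i)^2 p}$ through the $\zeta(px)$-pole, and $\frac{q}{2\pi i p}\partial_\tau\log\sigma(px;\tau)$ contributing the complementary $-\frac{q\mu}{2\pi i p^2}$ (since $\partial_\tau\log\sigma(px;\tau)$ has a simple pole at $x_0$ with residue $-\mu/p$). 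A symmetric matching handles the $q$-torsion poles, and the $\wp(x;\tau)$ term in $R^-$ cancels the residual double pole at $x\in\ZZ+\ZZ\tau$ of the $\zeta\cdot\zeta$ product. Periodicity under $x\mapsto x+1$ is immediate since each constituent of $\Delta$ is manifestly $1$-periodic in $x$.

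The \emph{main obstacle} is periodicity under $x\mapsto x+\tau$. The identity $f(\cdot - x - \tau;\tau) = f(\cdot - x;\tau) + 2\pi i$ produces in $D^-(p,q;\tau;x+\tau) - D^-(p,q;\tau;x)$ a lattice sum $\frac{1}{2\pi i\, p}\sum_{(\lambda,\mu)\ne(0,0)}\bigl[f(q(\lambda+\mu\tau)/p;\tau) + 2\pi i q\mu/p\bigr]$, and an analogous term from $D^-(q,p;\tau;x)$. These defects must match the $\tau$-shift of $R^-$, which is generated by $\zeta(z+\tau;\tau) - E_2(\tau)(z+\tau) = \zeta(z;\tau) - E_2(\tau)z - 2\pi i$ in the $\zeta\cdot\zeta$ product, and by the transformation $\sigma(z+\tau;\tau) = -e^{\eta_2(z+\tau/2)}\sigma(z;\tau)$ whose $\tau$-derivative yields the $\partial_\tau E_2$ and $\partial_\tau\log\sigma$ corrections appearing in $R^-$. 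The precise cancellation --- which is what justifies the presence of $\tau$-derivatives in $R^-$ and which distinguishes this elliptic reciprocity from its classical and trigonometric analogues --- is the technical heart of the proof; it rests on the heat-type identity for $\sigma$ (relating $\partial_\tau\sigma$ to $\partial_z^2\sigma$) combined with the Legendre relation. Once this $\tau$-shift periodicity is verified, $\Delta$ is entire and doubly periodic on $\CC/(\ZZ+\ZZ\tau)$, hence constant in $x$, proving Theorem \ref{thm1.3}(2).
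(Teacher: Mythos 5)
Your part (1) is correct and coincides with the paper's argument (quasi-periodicity \eqref{eqn4.1} of $\zeta$ plus oddness). Your part (2) takes a genuinely different route: the paper never argues function-theoretically in $x$; it identifies $D^-(p,q;\tau;x)$ and $R^-(p,q;\tau;x)$ with combinations of the elliptic Bernoulli functions $B_1,B_2$ via Lemma \ref{lem4.1} and then quotes Proposition \ref{prop3.1}, which is itself deduced from Machide's Theorem \ref{thm2.1} (or from Sczech's Satz 1). Your Liouville-on-the-torus strategy is a legitimate alternative --- it is essentially how Sczech's reciprocity law is proved in the first place --- and your residue bookkeeping at the $p$-torsion points (residue $-\mu/p$ of $\partial_\tau\log\sigma(px;\tau)$ matching the $2\pi i q\mu/p$ term, the $\wp$-term killing the double pole at lattice points) is correct.

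However, there is a genuine gap: the step you yourself call ``the technical heart'' --- invariance of $\Delta$ under $x\mapsto x+\tau$ --- is asserted, not proved, and your description of the mechanism is not right. The defect of $D^-(p,q;\tau;x)$ under $x\mapsto x+\tau$ is $\frac{1}{2\pi i\,p}\sum_{(\lambda,\mu)\ne(0,0)}\EE_1\left(q(\lambda+\mu\tau)/p;\tau\right)$, and this does not need to ``match'' anything coming from $R^-$: it vanishes on its own, because $\EE_1(z)=\zeta(z;\tau)-E_2(\tau)z+2\pi i v$ (for $z=u+v\tau$) is an odd, lattice-periodic function and the points $q(\lambda+\mu\tau)/p$ run over the nonzero $p$-torsion points, which pair off under $z\mapsto -z$. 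Consequently the left-hand side $D^-(p,q;\tau;x)+D^-(q,p;\tau;x)$ is already doubly periodic, and you must \emph{separately} verify that $R^-(p,q;\tau;x)$ is invariant under $x\mapsto x+\tau$; this is a nontrivial computation in which the $-2\pi i p$ and $-2\pi i q$ picked up by $\zeta(px;\tau)-E_2(\tau)px$ and $\zeta(qx;\tau)-E_2(\tau)qx$ must be cancelled by the $\tau$-shift behaviour of $\partial_\tau\log\sigma(px;\tau)$ and $\partial_\tau\log\sigma(qx;\tau)$ (via $\sigma(z+\tau;\tau)=-e^{\eta_2(2z+\tau)/\,\cdot}\sigma(z;\tau)$ with $2\eta_2=E_2(\tau)\tau-2\pi i$, differentiated in $\tau$) together with the $-\partial_\tau E_2\cdot(px)^2$ corrections. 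Until that identity is written out, the Liouville argument does not close, so the proof as it stands is incomplete.
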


\begin{rem}
{\rm
Using the function  $E_k(z;\tau)$ defined by
  $$E_k(z;\tau):=\sum_{\gamma\in \ZZ\tau+\ZZ}
        (\gamma+z)^{-k}|\gamma+z|^{-s}\Big |_{s=0}
  $$
(see Sczech \cite{SC1} for details of this function),
we can express the sums
$D_{2n}^-(p,q;\tau)$ and
$D^-(p,q;\tau;x)$
in Definitions \ref{defn1.1} and \ref{defn1.2} as follows:
\begin{equation*}
  D_{2n}^-(p,q;\tau)=\frac{1}{(2\pi i)^2p}
    \sum_{\substack{\lambda,\mu=0\\ (\lambda,\mu)\ne (0,0)}}^{p-1}
    E_{2n+1}\left(
    \frac{\lambda+\mu\tau}{p};\tau\right)
    E_1\left(
    \frac{q(\lambda+\mu\tau)}{p};\tau\right),
\end{equation*}

\begin{equation*}
  D^-(p,q;\tau;x)
  =\frac{1}{(2\pi i)^2p}
    \sum_{\substack{\lambda,\mu=0\\ (\lambda,\mu)\ne (0,0)}}^{p-1}
    E_1\left(
    \frac{\lambda+\mu\tau}{p}-x;\tau\right)
    E_1\left(
    \frac{q(\lambda+\mu\tau)}{p};\tau\right).
\end{equation*}
}
\end{rem}

As an application of our results,
in the last section, we discover
Eisenstein series identities (Theorem \ref{thm7.1}).
In doing so, we rediscover the formulas by Ramanujan \cite{RA1},
van der Pol \cite{PO1}, Rankin \cite{RA2} and Skoruppa \cite{SK1}.

\section{Machide's reciprocity laws}
\label{sect2}

In this section we recall Machide's result \cite{MA1}
on his elliptic Dedekind-Rademacher sums. His result will play an important role
in proving Theorem \ref{thm1.3}.
We will use some standard notation:
$\be(x):=\exp(2\pi ix)$,
$q:=\be(\tau)$,
  $$\theta(x;\tau):
  =\sum_{n=-\infty}^{\infty}
    \be(\frac12(n+\frac12)^2\tau+(n+\frac12)(x+\frac12)). \\
  $$
We consider the following functions
(refer to \cite{LE1}, \cite{MA1}, \cite{WE1}, \cite{ZA1})
\begin{align*}
F(\xi,\eta;\tau):
  &=\frac{\theta'(0;\tau)\theta(\xi+\eta;\tau)}
    {\theta(\xi;\tau)\theta(\eta;\tau)}, \\
\underline{F}(x,y;\xi;\tau):
  &=\be(y\xi)F(-x+y\tau,\xi;\tau).
\end{align*}
Set
\begin{equation}\label{eqn2.0}
  \underline{F}(x,y;\xi;\tau)
  =\sum_{m=0}^{\infty}\frac{B_m(x,y;\tau)}{m!}(2\pi i)^m\xi^{m-1}.
\end{equation}
The function $B_m(x,y;\tau)$ is called Kronecker's double series
or the elliptic Bernoulli function.
The following expansion of $B_m(x,y;\tau)$
will be used in the later section:
\begin{equation}\label{eqn2.1}
\begin{split}
B_m(x,y;\tau)
  &=m\Bigg[
  \sum_{j=1}^{\infty}(y-j)^{m-1}\frac{\be(-y\tau)q^j}{\be(-x)-\be(-y\tau)q^j}
    \\
  &\ \ \ \ \
  -\sum_{j=1}^{\infty}(y+j)^{m-1}\frac{\be(y\tau)q^j}{\be(x)-\be(y\tau)q^j}
  +y^{m-1}\frac{\be(-x+y\tau)}{\be(-x+y\tau)-1}
  \Bigg]+B_m(y).
\end{split}
\end{equation}

Let $a,a',b,b',c,c'$ be positive integers, and $x,x',y,y',z,z'$ real numbers.
Suppose that
\begin{equation*}
a'z'-c'x'\not\in \gcd(a',c')\ZZ  \text{\ \ \ and\ \ \ }
b'z'-c'y'\not\in \gcd(b',c')\ZZ.
\end{equation*}
Set $(\ba,\bb,\bc):=((a',a),(b',b),(c',c))$ and
$(\bx,\by,\bz):=((x',x),(y',y),(z',z))$.
Machide defined the elliptic Dedekind-Rademacher sum as:
\begin{equation}\label{eqn2.2}
\begin{split}
S_{m,n}^\tau\begin{pmatrix}\ba&\bb&\bc\\ \bx&\by&\bz\end{pmatrix}
  :=\frac{1}{c'}\sum_{\substack{j\pmod c \\ j'\pmod {c'}}}
  &B_m\left(a'\frac{j'+z'}{c'}-x',a\frac{j+z}{c}-x;\frac{a'}{a}\tau\right)\\
  &\times
    B_n\left(b'\frac{j'+z'}{c'}-y',b\frac{j+z}{c}-y;\frac{b'}{b}\tau\right).
\end{split}
\end{equation}
Furthermore he introduced a generating function for $S_{m,n}^\tau$ by
  $$\mathfrak{S}^\tau
  \begin{pmatrix}\ba&\bb&\bc\\ \bx&\by&\bz\\ X&Y&Z\end{pmatrix}
  :=\sum_{m,n=0}^{\infty}\frac{1}{m!n!}
    S_{m,n}^\tau\begin{pmatrix}\ba&\bb&\bc\\ \bx&\by&\bz\end{pmatrix}
    \left(\frac{X}{a}\right)^{m-1}\left(\frac{Y}{b}\right)^{n-1}
  $$
where $Z$ is defined by $Z=-X-Y$.

Under this notation Machide obtained the following reciprocity law
for $\mathfrak{S}^\tau$.

\begin{thm}[Machide\cite{MA1}]\label{thm2.1}
Let $X,Y,Z$ be variables with $X+Y+Z=0$, and $a,a',b,b',c,c'$
positive integers, and $x,y,z$ real numbers.
Let $x',y'$ and $z'$ be real numbers such that
\begin{equation}\label{eqn2.3}
a'y'-b'x'\not\in \gcd(a',b')\ZZ,\ \
a'z'-c'x'\not\in \gcd(a',c')\ZZ,\ \
b'z'-c'y'\not\in \gcd(b',c')\ZZ,\ \
\end{equation}
and let
$(\ba,\bb,\bc)=((a',a),(b',b),(c',c))$,
$(\bx,\by,\bz)=((x',x),(y',y),(z',z))$.

Suppose that the integers $a,b$ and $c$ $($resp. $a',b'$ and $c'$$)$
have no common factor.

Then we have
\begin{equation*}
\mathfrak{S}^\tau\begin{pmatrix}\ba&\bb&\bc\\ \bx&\by&\bz\\ X&Y&Z\end{pmatrix}
+\mathfrak{S}^\tau\begin{pmatrix}\bb&\bc&\ba\\ \by&\bz&\bx\\ Y&Z&X\end{pmatrix}
+\mathfrak{S}^\tau\begin{pmatrix}\bc&\ba&\bb\\ \bz&\bx&\by\\ Z&X&Y\end{pmatrix}
=0.
\end{equation*}
\end{thm}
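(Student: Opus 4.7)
My plan is to prove the identity at the level of the generating function $\mathfrak{S}^\tau$ and then read off the equations for the individual $S^\tau_{m,n}$ by extracting Laurent coefficients in $X$ and $Y$. The driving observation is that $\mathfrak{S}^\tau$ is, up to explicit normalization, a lattice-averaged product (over $(j,j')\in\ZZ/c\times\ZZ/c'$) of two Kronecker functions $\underline{F}$; the cyclic sum in the theorem is then a sum of three such averages, and I want to show it collapses to zero via a three-term theta identity whose hypothesis is exactly $X+Y+Z=0$.

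First, applying the expansion (2.0) to each factor with $\xi=X/a$ and $\xi=Y/b$ respectively, the $(j,j')$-summand of $S^\tau_{m,n}/(m!\,n!)$ is, up to an explicit factor of $(2\pi i)^{-m-n}a^{m-1}b^{n-1}$, the $X^{m-1}Y^{n-1}$-coefficient of
\begin{equation*}
\underline{F}\bigl(\alpha,\beta;\,X/a;\,(a'/a)\tau\bigr)\cdot \underline{F}\bigl(\alpha',\beta';\,Y/b;\,(b'/b)\tau\bigr),
\end{equation*}
where $\alpha=a'(j'+z')/c'-x'$, $\beta=a(j+z)/c-x$, and $\alpha',\beta'$ are defined analogously with $(a',a,x',x)$ replaced by $(b',b,y',y)$. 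Summing over $(j,j')$ and dividing by $c'$ therefore realizes $\mathfrak{S}^\tau$ as the full generating function of a lattice-averaged product of two $\underline{F}$'s.

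Next, using $\underline{F}(x,y;\xi;\tau)=\be(y\xi)F(-x+y\tau,\xi;\tau)$ together with the modular-scaling identity $F(u/n,\xi/n;\tau/n)=(1/n)F(u,\xi;\tau)\cdot(\text{explicit theta phase})$, I would normalize the three different moduli $(a'/a)\tau$, $(b'/b)\tau$, $(c'/c)\tau$ onto a common period so that each of the three cyclic generating functions becomes a lattice sum of products of two ordinary Kronecker $F$'s. They should then cancel via the three-term Kronecker--Riemann identity
\begin{equation*}
F(u_1,v_1;\tau)F(u_2,v_2;\tau)+F(u_2,v_2;\tau)F(u_3,v_3;\tau)+F(u_3,v_3;\tau)F(u_1,v_1;\tau)=0,
\end{equation*}
valid when $u_1+u_2+u_3=v_1+v_2+v_3=0$, which I would prove in the standard way: the left side is doubly periodic in each $v_i$ with no poles under conditions (2.3), hence constant by Liouville, and vanishes at a convenient specialization (e.g.\ letting $u_1\to 0$ with matched cancellation in the other two terms).

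The principal obstacle is the combinatorial bookkeeping. After the cyclic permutation $(\ba,\bb,\bc)\mapsto(\bb,\bc,\ba)$ the three generating functions are realized as lattice sums indexed by different pairs $(j,j')$, and they must be re-indexed onto a common lattice before the three-term identity applies. Using the primitivity hypotheses $\gcd(a,b,c)=1=\gcd(a',b',c')$ together with appropriate Bezout coefficients one effects this reindexing, and condition (2.3) is then precisely what guarantees that no spurious poles appear in the $F$-factors after reindexing. Once the alignment is in place, the three-term theta identity yields the vanishing of the cyclic sum, and extracting the $(X/a)^{m-1}(Y/b)^{n-1}$ coefficient completes the proof.
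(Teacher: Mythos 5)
First, a remark on scope: the paper does not prove Theorem \ref{thm2.1} at all --- it is imported verbatim from Machide \cite{MA1} --- so there is no internal proof to compare yours against; your attempt has to stand on its own.

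It does not, because the identity you make the engine of the whole argument,
\begin{equation*}
F(u_1,v_1;\tau)F(u_2,v_2;\tau)+F(u_2,v_2;\tau)F(u_3,v_3;\tau)+F(u_3,v_3;\tau)F(u_1,v_1;\tau)=0
\quad\text{when } \textstyle\sum u_i=\sum v_i=0,
\end{equation*}
is false. In the degeneration $\tau\to i\infty$ one has $F(\xi,\eta;\tau)\to\pi(\cot\pi\xi+\cot\pi\eta)$, so writing $c_i=\pi\cot\pi u_i$, $d_i=\pi\cot\pi v_i$, the left side tends to $\sum_{\mathrm{cyc}}c_1c_2+\sum_{\mathrm{cyc}}d_1d_2+\bigl(\sum_i c_i\bigr)\bigl(\sum_i d_i\bigr)-\sum_i c_id_i$. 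By the classical identity $\cot A\cot B+\cot B\cot C+\cot C\cot A=1$ for $A+B+C=0$, the first two sums each equal $\pi^2$, not $0$; taking $v_i=u_i$ the whole expression becomes $4\pi^2\neq 0$. Your Liouville argument cannot rescue this: under $v_1\mapsto v_1+\tau$ (hence $v_3\mapsto v_3-\tau$) the three terms acquire the \emph{different} multipliers $\be(-u_1)$, $\be(u_3)$, $\be(u_3-u_1)$, so the cyclic sum is not an elliptic function of $v_1$ and no Liouville conclusion is available. The genuine three-term relation for the Kronecker function is of Fay type, e.g.\ $F(x_1,y_1)F(x_2,y_2)=F(x_1,y_1+y_2)F(x_2-x_1,y_2)+F(x_2,y_1+y_2)F(x_1-x_2,y_1)$, in which the arguments are recombined across the terms; it is this addition-type relation (together with distribution relations to reconcile the three lattices $\ZZ\frac{a'}{a}\tau+\ZZ$, etc., and the boundary terms coming from the $m=0$ or $n=0$ coefficients of \eqref{eqn2.0} under hypothesis \eqref{eqn2.3}) that underlies Machide's proof. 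Two further mismatches in your setup: in the application the $\xi$-slots of the three generating functions are $X/a$, $Y/b$, $Z/c$, whose sum is \emph{not} zero even though $X+Y+Z=0$, so the hypothesis of your lemma would not even be met; and the ``modular-scaling identity'' $F(u/n,\xi/n;\tau/n)=\frac1nF(u,\xi;\tau)\cdot(\text{phase})$ you invoke to put the three factors on a common lattice is not a correct homogeneity statement for $F$ (scaling $u,\xi$ and the full lattice $\ZZ\tau+\ZZ$ simultaneously is what homogeneity gives, and $\ZZ\frac{\tau}{n}+\ZZ\frac1n\neq\ZZ\frac{\tau}{n}+\ZZ$). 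The overall architecture (generating functions as lattice-averaged products of Kronecker functions, reduction to a theta identity) is the right one, but the specific identity driving the cancellation must be replaced.
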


\section{Reciprocity laws derived from formulas of Machide and Sczech}\label{sect3}

In this section we prove the following proposition,
from which we will deduce Theorem \ref{thm1.3}.
\begin{prop}\label{prop3.1}
For $(p,q)\in U$, $\tau\in\HH$ and a sufficiently small real number $s\ne 0$,
it holds that
\begin{equation}\label{eqn3.1}
\begin{split}
  \frac1p&\sum_{\substack{\lambda,\mu=0\\ (\lambda,\mu)\ne (0,0)}}^{p-1}
    B_1\left(\frac{\lambda}{p}-s,\frac{\mu}{p};\tau\right)
    B_1\left(\frac{q\lambda}{p},\frac{q\mu}{p};\tau\right) \\
  &\cccsp  +\frac1q\sum_{\substack{\lambda,\mu=0\\ (\lambda,\mu)\ne (0,0)}}^{q-1}
      B_1\left(\frac{\lambda}{q}-s,\frac{\mu}{q};\tau\right)
      B_1\left(\frac{p\lambda}{q},\frac{p\mu}{q};\tau\right)  \\
  &=-B_1(ps,0;\tau)B_1(qs,0;\tau)
    +\frac{q}{2p}B_2(ps,0;\tau)
    +\frac{p}{2q}B_2(qs,0;\tau) \\
  &\cccsp  +\frac{1}{2\pi ipq}\frac{\partial B_1(s,0;\tau)}{\partial s}+C(\tau)
\end{split}
\end{equation}
where $C(\tau)$ is a constant with respect to $s$.
\end{prop}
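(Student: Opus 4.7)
The plan is to derive \eqref{eqn3.1} by specializing Machide's generating-function identity (Theorem \ref{thm2.1}) and extracting the coefficient of $X^0Y^0$ under the constraint $Z=-X-Y$. Concretely, I would apply Theorem \ref{thm2.1} with $\ba=(1,1)$, $\bb=(q,q)$, $\bc=(p,p)$, $\bx=(s,0)$, $\by=(0,0)$, $\bz=(0,0)$. Since $(p,q)\in U$ gives $\gcd(p,q)=1$, the coprimality hypotheses on $(a,b,c)$ and $(a',b',c')$ are satisfied, and for generic small nonzero $s$ the non-degeneracy conditions $-qs\not\in\ZZ$ and $-ps\not\in\ZZ$ hold. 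The remaining condition $b'z'-c'y'=0\not\in\ZZ$ fails, but this failure corresponds precisely to the exclusion of the summand $(\lambda,\mu)=(0,0)$ on the left of \eqref{eqn3.1}; I would handle it by perturbing $\by$ to $(0,t)$, applying Theorem \ref{thm2.1}, and then letting $t\to 0$, with the $s$-independent divergent residue absorbed into the constant $C(\tau)$.

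Next I would compute each of the three cyclic generating functions under this specialization. Because $a=a'$ and $b=b'$ throughout, every Kronecker double series appearing below has modular parameter exactly $\tau$. The first term $\mathfrak{S}_1^\tau$ has generating variables $X^{m-1}(Y/q)^{n-1}$ and, at $(m,n)=(1,1)$, contributes exactly
$\frac{1}{p}\sum_{\lambda,\mu=0}^{p-1} B_1\bigl(\tfrac{\lambda}{p}-s, \tfrac{\mu}{p}; \tau\bigr) B_1\bigl(\tfrac{q\lambda}{p}, \tfrac{q\mu}{p}; \tau\bigr)$,
matching the first sum on the left of \eqref{eqn3.1} up to the excluded $(0,0)$ term. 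The third cyclic term $\mathfrak{S}_3^\tau$ is the analogous object with $p\leftrightarrow q$ and generating variables $(Z/p)^{m-1}X^{n-1}$, producing the second sum on the left of \eqref{eqn3.1}. The middle term $\mathfrak{S}_2^\tau$ has $c=c'=1$, so its $(j,j')$-sum collapses and it becomes $\sum_{m,n}\frac{1}{m!n!} B_m(qs,0;\tau) B_n(ps,0;\tau)(Y/q)^{m-1}(Z/p)^{n-1}$.

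Finally I would substitute $Z=-X-Y$ into Machide's identity $\mathfrak{S}_1^\tau+\mathfrak{S}_2^\tau+\mathfrak{S}_3^\tau=0$ and read off the coefficient of $X^0Y^0$ as a formal Laurent series. In $\mathfrak{S}_1^\tau$ and $\mathfrak{S}_3^\tau$ only $(m,n)=(1,1)$ contributes directly, producing the two left-hand sums of \eqref{eqn3.1}. In $\mathfrak{S}_2^\tau$, expansion of $(-X-Y)^{n-1}$ distributes contributions across several $(m,n)$: the pair $(1,1)$ gives $B_1(qs,0;\tau)B_1(ps,0;\tau)$; the pair $(0,2)$ interacts with $(Y/q)^{-1}$ to yield $\tfrac{q}{2p}B_2(ps,0;\tau)$, and $(2,0)$ interacts with $(Z/p)^{-1}$ to yield $\tfrac{p}{2q}B_2(qs,0;\tau)$; and the next-to-leading Laurent pieces of $(Y/q)^{-1}$ and $(Z/p)^{-1}$, combined with the $\xi^{-1}$ pole in the expansion \eqref{eqn2.0} of $\underline{F}$, produce the derivative term $\frac{1}{2\pi ipq}\frac{\partial B_1(s,0;\tau)}{\partial s}$. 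The principal obstacle is this coefficient extraction: the Laurent expansion of $1/(X+Y)$ must be handled consistently, and the various $s$-independent residual pieces, including the $t\to 0$ limit of the perturbation, must be collected correctly into $C(\tau)$.
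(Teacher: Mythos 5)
Your overall strategy---specializing Machide's reciprocity to the triple $(1,1)$, $(p,p)$, $(q,q)$ with a shift $s$ in the first slot, regularizing the degenerate $(0,0)$ term by a perturbation $t\to 0$, and extracting low-order coefficients---is the same as the paper's first proof. But there is a genuine gap in the extraction step. After substituting $Z=-X-Y$, the terms with $m=0$ or $n=0$ in $\mathfrak{S}_2^\tau$ and $\mathfrak{S}_3^\tau$ carry a factor proportional to $1/(X+Y)$, and ``the coefficient of $X^0Y^0$'' of such a factor is not well defined: it depends on whether you expand in the region $|X|<|Y|$ or $|Y|<|X|$, and the two choices give different answers. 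You name this as the principal obstacle but do not resolve it. Relatedly, your claim that only $(m,n)=(1,1)$ contributes in $\mathfrak{S}_1^\tau$ and $\mathfrak{S}_3^\tau$ fails for $\mathfrak{S}_3^\tau$, whose generating variables are $(Z/p)^{m-1}X^{n-1}$: for instance $(m,n)=(2,0)$ and the entire $m=0$ tail also meet the monomial $X^0Y^0$. The paper circumvents all of this by first multiplying the identity by $X$ and only then eliminating $X=-Y-Z$, so that every power of $X$ becomes a nonnegative power of $Y+Z$; the only negative powers left are in $Y$ and $Z$ separately, coefficient extraction of $Y^iZ^j$ is unambiguous, and the proposition follows from a specific linear combination of the coefficients of $Y^2Z^{-1}$, $Z^2Y^{-1}$ and $Y$ (Lemma \ref{lem3.2}). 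You would need this, or an equivalent pole-clearing device such as multiplying through by $XY(X+Y)$, to make your extraction legitimate.

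Two further points. First, perturbing $\by$ to $(0,t)$ moves the unprimed component, but the nondegeneracy hypotheses \eqref{eqn2.3} of Theorem \ref{thm2.1} constrain only the primed components, so $b'z'-c'y'=0\in\ZZ$ still fails; the paper instead takes $\by=(pt,0)$, $\bz=(-qt,0)$, which satisfies all three conditions for small $t\ne 0$. Second, in the paper the derivative term $\frac{1}{2\pi ipq}\frac{\partial B_1(s,0;\tau)}{\partial s}$ does not come from ``next-to-leading Laurent pieces'' of $1/(X+Y)$: it is the finite part surviving the cancellation of the $1/t$ singularities of the two $(\lambda,\mu)=(0,0)$ summands as $t\to 0$, computed from the expansion of $B_1(x,0;\tau)$ at $x=0$ (Lemma \ref{lem4.1}). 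Your accounting of where each right-hand-side term originates would therefore need to be redone once the extraction is set up correctly.
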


We will give two proofs for Proposition \ref{prop3.1}.
The first proof is our original one which
is derived from Machide's formula (Theorem \ref{thm2.1}).
The second proof is the one proposed by the referee,
and it is brief and elegant
and is based on Sczech's reciprocity law for
elliptic Dedekind sums (\cite{SC1}).
We believe that our original proof is still interesting
in its own right, and it would be applicable to other problems
related to generalized Dedekind sums.

The first
proof of Proposition \ref{prop3.1} rests on the following
lemma.
\begin{lem}\label{lem3.2}
Under the notation and assumptions of Theorem \ref{thm2.1},
we have
\begin{equation}\label{eqn3.2}
-\frac{c}{2b}S_{2,0}^\tau\begin{pmatrix}\bb&\bc&\ba\\ \by&\bz&\bx\end{pmatrix}
+\frac{c}{2a}S_{0,2}^\tau\begin{pmatrix}\bc&\ba&\bb\\ \bz&\bx&\by\end{pmatrix}
=0,
\end{equation}
\begin{equation}\label{eqn3.3}
\frac{b}{2a}S_{2,0}^\tau\begin{pmatrix}\ba&\bb&\bc\\ \bx&\by&\bz\end{pmatrix}
-\frac{b}{2c}S_{0,2}^\tau\begin{pmatrix}\bb&\bc&\ba\\ \by&\bz&\bx\end{pmatrix}
=0,
\end{equation}
\begin{equation}\label{eqn3.4}
\begin{split}
\frac{a}{2b}S_{0,2}^\tau&\begin{pmatrix}\ba&\bb&\bc\\ \bx&\by&\bz\end{pmatrix}
-S_{1,1}^\tau\begin{pmatrix}\ba&\bb&\bc\\ \bx&\by&\bz\end{pmatrix}
+\frac{b}{2a}S_{2,0}^\tau\begin{pmatrix}\ba&\bb&\bc\\ \bx&\by&\bz\end{pmatrix}
\\
&-S_{1,1}^\tau\begin{pmatrix}\bb&\bc&\ba\\ \by&\bz&\bx\end{pmatrix}
-\frac{c}{2b}S_{2,0}^\tau\begin{pmatrix}\bb&\bc&\ba\\ \by&\bz&\bx\end{pmatrix}
\\
&+\frac{c}{a}S_{0,2}^\tau\begin{pmatrix}\bc&\ba&\bb\\ \bz&\bx&\by\end{pmatrix}
-S_{1,1}^\tau\begin{pmatrix}\bc&\ba&\bb\\ \bz&\bx&\by\end{pmatrix}
=0.
\end{split}
\end{equation}
\end{lem}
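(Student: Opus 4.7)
The plan is to derive (3.2), (3.3), and (3.4) from Machide's generating-function identity $\mathfrak{S}_1+\mathfrak{S}_2+\mathfrak{S}_3=0$ in Theorem~\ref{thm2.1} by analyzing its degree-zero component; here $\mathfrak{S}_i$ denotes the $i$-th cyclically permuted summand and $S_{m,n}^{(i)}$ the corresponding elliptic Dedekind--Rademacher sum.

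First I would apply the dilation $(X,Y,Z)\mapsto(tX,tY,tZ)$, which preserves the constraint $X+Y+Z=0$ and splits Machide's identity into homogeneous components in $t$ that each vanish separately. Since the $(m,n)$ summand of $\mathfrak{S}_1$ has weight $m+n-2$ in $(X,Y)$, and similarly cyclically for $\mathfrak{S}_2,\mathfrak{S}_3$, the degree-zero component involves exactly the nine terms $S_{m,n}^{(i)}$ with $m+n=2$ that appear in (3.2)--(3.4), and nothing else. Substituting $Z=-X-Y$, this component becomes a rational function in $(X,Y)$ with simple poles along the three lines $X=0$, $Y=0$, and $X+Y=0$, each of which receives contributions from exactly two of the three cyclic summands (for example, the $X+Y=0$ pole comes from the $X/Z$ piece in $\mathfrak{S}_3$ and the $Y/Z$ piece in $\mathfrak{S}_2$).

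Vanishing of the three principal parts then yields three linear relations: the $1/Y$ residue gives (3.3), the $1/(X+Y)$ residue gives (3.2), and the $1/X$ residue gives a cyclic companion $c\,S_{0,2}^{(1)}=b\,S_{2,0}^{(3)}$ (not stated in the lemma but needed below). Once these three pole-cancellation identities are imposed, the remaining pole-free part of the degree-zero component collapses to a constant, because the two $1/(X+Y)$ contributions combine under (3.2) into a constant $\tfrac{c}{2b}S_{2,0}^{(2)}$ via $Y/(X+Y)+X/(X+Y)=1$. Setting this constant to zero and then invoking (3.2), (3.3), and the cyclic companion to trade the three $S_{2,0}^{(i)}$ terms for the corresponding $S_{0,2}^{(j)}$'s rewrites the resulting identity into the asymmetric combination displayed as (3.4).

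The main obstacle I anticipate is this last rewriting step. The constant obtained after cancelling the poles is most naturally expressed as a symmetric mixture of the $S_{2,0}^{(i)}$ and $S_{0,2}^{(i)}$ terms, and bringing it into the specific asymmetric form of (3.4) requires applying each of (3.2), (3.3), and the cyclic companion in the correct direction; in particular the combination $-\tfrac{c}{2b}S_{2,0}^{(2)}+\tfrac{c}{a}S_{0,2}^{(3)}$ appearing in (3.4) simplifies via (3.2) to $\tfrac{c}{2b}S_{2,0}^{(2)}$, which is what emerges directly from the pole analysis. Beyond this bookkeeping, the proof is essentially mechanical coefficient extraction from Machide's reciprocity.
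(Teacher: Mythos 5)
Your proposal is correct and takes essentially the same route as the paper: both isolate the $m+n=2$ homogeneous component of Machide's reciprocity (Theorem \ref{thm2.1}) and split it into independent pieces using the constraint $X+Y+Z=0$. The paper streamlines the bookkeeping by first multiplying the generating-function identity by $X$ and then substituting $X=-Y-Z$, so that \eqref{eqn3.2}, \eqref{eqn3.3} and \eqref{eqn3.4} each appear directly as the coefficient of a single monomial ($Y^2Z^{-1}$, $Z^2Y^{-1}$ and $Y$ respectively), whereas your partial-fraction version needs the extra companion relation $c\,S_{0,2}^{(1)}=b\,S_{2,0}^{(3)}$ and a final recombination to reach \eqref{eqn3.4} --- and the constant produced by the two $X+Y$ poles is $-\tfrac{c}{2b}S_{2,0}^{(2)}$ rather than $+\tfrac{c}{2b}S_{2,0}^{(2)}$, a sign slip that does not affect the argument.
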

\begin{proof}
>From Theorem \ref{thm2.1} we have
\begin{equation*}
\begin{split}
  &\sum_{m,n=0}^{\infty}\frac{1}{m!n!}
    S_{m,n}^\tau\begin{pmatrix}\ba&\bb&\bc\\ \bx&\by&\bz\end{pmatrix}
    \left(\frac{X}{a}\right)^{m-1}\left(\frac{Y}{b}\right)^{n-1}X \\
  &\ \ \ +\sum_{m,n=0}^{\infty}\frac{1}{m!n!}
    S_{m,n}^\tau\begin{pmatrix}\bb&\bc&\ba\\ \by&\bz&\bx\end{pmatrix}
    \left(\frac{Y}{b}\right)^{m-1}\left(\frac{Z}{c}\right)^{n-1}X \\
  &\ \ \ +\sum_{m,n=0}^{\infty}\frac{1}{m!n!}
    S_{m,n}^\tau\begin{pmatrix}\bc&\ba&\bb\\ \bz&\bx&\by\end{pmatrix}
    \left(\frac{Z}{c}\right)^{m-1}\left(\frac{X}{a}\right)^{n-1}X \\
  &\ \ \ =0.
\end{split}
\end{equation*}
>From this and the equation $X=-Y-Z$, we know
\begin{equation}\label{eqn3.5}
\begin{split}
  &\sum_{m,n=0}^{\infty}\frac{1}{m!n!}
    S_{m,n}^\tau\begin{pmatrix}\ba&\bb&\bc\\ \bx&\by&\bz\end{pmatrix}
    (-1)^ma^{1-m}b^{1-n}(Y+Z)^mY^{n-1} \\
  &\ \ \ \ +\sum_{m,n=0}^{\infty}\frac{1}{m!n!}
    S_{m,n}^\tau\begin{pmatrix}\bb&\bc&\ba\\ \by&\bz&\bx\end{pmatrix}
    (-1)b^{1-m}c^{1-n}Y^{m-1}Z^{n-1}(Y+Z) \\
  &\ \ \ \ +\sum_{m,n=0}^{\infty}\frac{1}{m!n!}
    S_{m,n}^\tau\begin{pmatrix}\bc&\ba&\bb\\ \bz&\bx&\by\end{pmatrix}
    (-1)^nc^{1-m}a^{1-n}Z^{m-1}(Y+Z)^n \\
  &\ \ \ =0.
\end{split}
\end{equation}
Now, taking the coefficients of
$Y^2Z^{-1}$, $Z^2Y^{-1}$ and $Y$
in \eqref{eqn3.5},
we obtain the identities
\eqref{eqn3.2}, \eqref{eqn3.3} and \eqref{eqn3.4},
respectively.
\end{proof}

Now we are ready to prove Proposition \ref{prop3.1}.
\begin{proof}[The first proof of Proposition \ref{prop3.1}]
>From the three identities
\eqref{eqn3.2},\eqref{eqn3.3} and \eqref{eqn3.4}
we have
\begin{equation}\label{eqn3.6}
\begin{split}
\frac{a}{2b}S_{0,2}^\tau&\begin{pmatrix}\ba&\bb&\bc\\ \bx&\by&\bz\end{pmatrix}
-S_{1,1}^\tau\begin{pmatrix}\ba&\bb&\bc\\ \bx&\by&\bz\end{pmatrix}
+\frac{b}{2a}\frac{a}{c}S_{0,2}^\tau\begin{pmatrix}\bb&\bc&\ba\\ \by&\bz&\bx\end{pmatrix}
\\
&-S_{1,1}^\tau\begin{pmatrix}\bb&\bc&\ba\\ \by&\bz&\bx\end{pmatrix}
-\frac{c}{2b}S_{2,0}^\tau\begin{pmatrix}\bb&\bc&\ba\\ \by&\bz&\bx\end{pmatrix}
\\
&+\frac{c}{a}\frac{a}{b}S_{2,0}^\tau\begin{pmatrix}\bb&\bc&\ba\\ \by&\bz&\bx\end{pmatrix}
-S_{1,1}^\tau\begin{pmatrix}\bc&\ba&\bb\\ \bz&\bx&\by\end{pmatrix}
=0.
\end{split}
\end{equation}
We set
\begin{equation*}
\ba=(1,1),\ \bb=(p,p),\ \bc=(q,q)\ \ \text{and}\ \
\bx=(s,0),\ \by=(pt,0),\ \bz=(-qt,0).
\end{equation*}
Note that the conditions \eqref{eqn2.3} are satisfied in this setting.
>From \eqref{eqn3.6} and \eqref{eqn2.2} we have
{\allowdisplaybreaks
\begin{equation}\label{eqn3.7}
\begin{split}
0=&\frac{a}{2b}S_{0,2}^\tau\begin{pmatrix}\ba&\bb&\bc\\ \bx&\by&\bz\end{pmatrix}
-S_{1,1}^\tau\begin{pmatrix}\ba&\bb&\bc\\ \bx&\by&\bz\end{pmatrix}
+\frac{b}{2c}S_{0,2}^\tau\begin{pmatrix}\bb&\bc&\ba\\ \by&\bz&\bx\end{pmatrix}
\\
&-S_{1,1}^\tau\begin{pmatrix}\bb&\bc&\ba\\ \by&\bz&\bx\end{pmatrix}
+\frac{c}{2b}S_{2,0}^\tau\begin{pmatrix}\bb&\bc&\ba\\ \by&\bz&\bx\end{pmatrix}
-S_{1,1}^\tau\begin{pmatrix}\bc&\ba&\bb\\ \bz&\bx&\by\end{pmatrix} \\
=&\frac{1}{2pq}
  \sum_{\substack{\mu\pmod q \\ \lambda\pmod {q}}}
  B_2\left(\frac{p\lambda}{q}-2pt,\frac{p\mu}{q};\tau\right)\\
 &-\frac{1}{q}\sum_{\substack{\mu\pmod q \\ \lambda\pmod {q}}}
  B_1\left(\frac{\lambda}{q}-t-s,\frac{\mu}{q};\tau\right)
  B_1\left(\frac{p\lambda}{q}-2pt,\frac{p\mu}{q};\tau\right)\\
 &+\frac{p}{2q}B_2\left(qs+qt,0;\tau\right)
  -B_1\left(ps-pt,0;\tau\right)B_1\left(qs+qt,0;\tau\right) \\
 &+\frac{q}{2p}B_2\left(ps-pt,0;\tau\right)\\
 &-\frac{1}{p}\sum_{\substack{\mu\pmod p \\ \lambda\pmod {p}}}
  B_1\left(\frac{q\lambda}{p}+2qt,\frac{q\mu}{p};\tau\right)
  B_1\left(\frac{\lambda}{p}+t-s,\frac{\mu}{p};\tau\right).\\
\end{split}
\end{equation}
}
Now we will take the limit of the last expression in \eqref{eqn3.7}
as $t$ tends to $0$.
Extra care should be taken for the terms involving $\lambda=\mu=0$,
as a priori, $B_1(0,0;\tau)$ is not defined.
To go around this difficulty, we will make use of the following expansion of
$B_1(x,0;\tau)$ at $x=0$ (this will be proved later in Lemma \ref{lem4.1}):
\begin{equation*}
  B_1\left(x,0;\tau\right)
  =-\frac{1}{2\pi i}
  \left[\frac{1}{x}
  -E_2(\tau)x-E_4(\tau)x^3-\cdots
  \right].
\end{equation*}
We have
\begin{equation*}
\begin{split}
 -\frac{1}{q}
  &B_1\left(-t-s,0;\tau\right)
  B_1\left(-2pt,0;\tau\right)
  =
 -\frac{1}{q}
  B_1\left(t+s,0;\tau\right)
  B_1\left(2pt,0;\tau\right)\\
  &=\frac{1}{2\pi iq}
    \left[B_1\left(s,0;\tau\right)
    +\frac{\partial B_1\left(s,0;\tau\right)}
    {\partial s}t+\cdots\right]
    \left[\frac{1}{2pt}-E_2(\tau)(2pt)-\cdots\right]
\end{split}
\end{equation*}
and
\begin{equation*}
\begin{split}
 -\frac{1}{p}
  &B_1\left(2qt,0;\tau\right)
  B_1\left(t-s,0;\tau\right)
  =
  \frac{1}{p}
  B_1\left(2qt,0;\tau\right)
  B_1\left(-t+s,0;\tau\right)\\
  &=-\frac{1}{2\pi ip}
    \left[\frac{1}{2qt}-E_2(\tau)(2qt)-\cdots\right]
    \left[B_1\left(s,0;\tau\right)
    +\frac{\partial B_1\left(s,0;\tau\right)}
    {\partial s}(-t)+\cdots\right].
\end{split}
\end{equation*}
Hence we know
\begin{equation*}
\begin{split}
\lim_{t\to 0}
  &\left[ -\frac{1}{q}
  B_1\left(-t-s,0;\tau\right)
  B_1\left(-2pt,0;\tau\right)
   -\frac{1}{p}
  B_1\left(2qt,0;\tau\right)
  B_1\left(t-s,0;\tau\right)
  \right] \\
  &=\frac{1}{2\pi ipq}\frac{\partial B_1\left(s,0;\tau\right)}{\partial s}.
\end{split}
\end{equation*}
>From this we know that the last expression in \eqref{eqn3.7}
converges to
\begin{equation*}
\begin{split}
  &\frac{1}{2pq}
  \sum_{\substack{\mu\pmod q \\ \lambda\pmod {q}}}
  B_2\left(\frac{p\lambda}{q},\frac{p\mu}{q};\tau\right)\\
 &-\frac{1}{q}\sum_{\substack{\lambda,\mu=0\\ (\lambda,\mu)\ne (0,0)}}
  B_1\left(\frac{\lambda}{q}-s,\frac{\mu}{q};\tau\right)
  B_1\left(\frac{p\lambda}{q},\frac{p\mu}{q};\tau\right)\\
 &+\frac{p}{2q}B_2\left(qs,0;\tau\right)
  -B_1\left(ps,0;\tau\right)
  B_1\left(qs,0;\tau\right)
  +\frac{q}{2p}
  B_2\left(ps,0;\tau\right)\\
 &-\frac{1}{p}\sum_{\substack{\lambda,\mu=0\\ (\lambda,\mu)\ne (0,0)}}
  B_1\left(\frac{q\lambda}{p},\frac{q\mu}{p};\tau\right)
  B_1\left(\frac{\lambda}{p}-s,\frac{\mu}{p};\tau\right)\\
 &+\frac{1}{2\pi ipq}\frac{\partial B_1\left(s,0;\tau\right)}{\partial s}
\end{split}
\end{equation*}
when $t$ tends to $0$.

Finally, setting
\begin{equation*}
C(\tau)=\frac{1}{2pq}
  \sum_{\substack{\mu\pmod q \\ \lambda\pmod {q}}}
  B_2\left(\frac{p\lambda}{q},\frac{p\mu}{q};\tau\right),\\
\end{equation*}
we obtain the identity \eqref{eqn3.1}. This completes the proof.
\end{proof}

Now we will give the second proof, which was kindly
communicated to us by the referee.

\begin{proof}[The second proof of Proposition \ref{prop3.1}]
We recall the identity \eqref{eqn2.0}
\begin{equation*}
  \underline{F}(x,y;\xi;\tau)
  =\sum_{m=0}^{\infty}\frac{B_m(x,y;\tau)}{m!}(2\pi i)^m\xi^{m-1}.
\end{equation*}
According to a classical result of Kronecker (refer to Weil \cite{WE1}),
the left hand side above admits the following partial fraction
decomposition
\begin{equation}\label{eqn3.8}
  \underline{F}(x,y;\xi;\tau)
  =\lim_{M \to \infty}\sum_{m=-M}^{M}
  \left(\lim_{N \to \infty}\sum_{n=-N}^{N}
  \frac{\chi(w\bar{z})}{w+\xi}
  \right)
\end{equation}
where $z=-x+y\tau$, $w=m\tau+n$ and
$\chi(t)=\exp(2\pi i\Im(t)/\Im(\tau))$.
Expanding the right hand side of \eqref{eqn3.8} into a power series in $\xi$,
we have
\begin{equation}\label{eqn3.9}
  \underline{F}(x,y;\xi;\tau)
  =\sum_{k=0}^{\infty}(-1)^{k-1}C_k(z)\xi^{k-1}
\end{equation}
where
\begin{equation}\label{eqn3.10}
  C_k(z)
  =\lim_{M \to \infty}\sum_{m=-M}^{M}
  \left(\lim_{N \to \infty}\sum_{n=-N}^{N}
  \frac{\chi(w\bar{z})}{w^k}
  \right).
\end{equation}

Therefore, from \eqref{eqn2.0} and \eqref{eqn3.9}, we know
\begin{equation}\label{eqn3.11}
  B_k(x,y;\tau)=\frac{(-1)^{k-1}k!}{(2\pi i)^k}C_k(-x+y\tau).
\end{equation}

In what follows, we use the notation $\EE(z)$ and $\EE_k(z)$
in place of $E(z)$ and $E_k(z)$ in Sczech \cite{SC1}
to distinguish them from the Eisenstein series. First we note that
\begin{equation}\label{eqn3.11.1}
  C_1(z)=\EE_1(z),\ \ \ C_2(z)=\EE(z).
\end{equation}

Now we apply Satz 1 in Sczech \cite[p.\ 530]{SC1}, setting
$$c_1=p,\ c_2=q,\ c_3=1,\ z_1=z_2=0,\ z_3=x.$$
This gives the following reciprocity law
\begin{equation}\label{eqn3.12}
\end{equation}
\begin{align*}
  \frac{q}{p}\EE(px)+\sum_{k=0,1}&q^{1-k}p^{k-1}
    \sum_{r\in L/L}\EE_k(px)\EE_{2-k}(qx) \\
  +\frac{1}{pq}\EE(0)&+\sum_{k=0,1}\frac{q^{k-1}}{p}
    \sum_{r\in L/pL}\EE_k(\frac{rq}{p})\EE_{2-k}(\frac{r}{p}-x) \\
  &+\frac{p}{q}\EE(-qx)+\sum_{k=0,1}\frac{p^{1-k}}{q}
    \sum_{r\in L/qL}\EE_k(\frac{r}{q}-x)\EE_{2-k}(\frac{rp}{q})=0
\end{align*}
where $L$ denotes the lattice $\ZZ\tau+\ZZ$.

Furthermore, it was shown in \cite{SC1} (using results of Hecke)
that
\begin{equation}\label{eqn3.12.1}
\end{equation}
\begin{align*}
  \EE_1(z)&=\zeta(z;\tau)-zE_2(\tau)+2\pi iy, \\
  2\EE(z)&=\wp(z;\tau)-C_1(z)^2, \\
  \EE(0)&=E_2(\tau)-\frac{\pi i}{\Im(\tau)}, \\
  \EE_0(z)&=\begin{cases}
         -1 & z\in L \\
          0 & \text{otherwise},
         \end{cases}
\end{align*}
where $\wp(z;\tau)$ and $\zeta(z;\tau)$ are the Weierstrass pe and zeta
functions and $E_2(\tau)$ is
the Eisenstein series of weight two.

Now we take $x$ to be a sufficiently small and $x\ne 0$ so that
$px$ and $qx$ are not rational integers.
Then the equation \eqref{eqn3.12} combined with the identities
\eqref{eqn3.11.1} and \eqref{eqn3.12.1}
produces the following formula
\begin{align*}
  (2\pi i)^2[D^-(p,q;\tau;x)&+D^-(q,p;\tau;x)] \\
  &=-C_1(qx)C_1(px)+\frac{\wp(x)}{pq}-\frac{p}{q}C_2(qx)-\frac{q}{p}C_2(px).
\end{align*}
This formula and the identities \eqref{eqn3.11} imply
Proposition \ref{prop3.1}.
\end{proof}

\section{Weierstrass elliptic functions
and elliptic Bernoulli functions}\label{sect4}

In this section we study the relationship between the Weierstrass
elliptic functions and the elliptic Bernoulli functions.

For $z\in\CC$ and $\tau\in\HH$,
the Weierstrass sigma, zeta and pe functions are given as follows:
\begin{align*}
\sigma(z;\tau)&:=z\prod
        _{\substack{\gamma\in \ZZ\tau+\ZZ \\ \gamma\ne 0}}
              \left(1-\frac{z}{\gamma}\right)
              \exp\left(\frac{z}{\gamma}+\frac{1}{2}
                \left(\frac{z}{\gamma}\right)^2\right), \\
\zeta(z;\tau)&:=\frac{1}{z}+\sum
        _{\substack{\gamma\in \ZZ\tau+\ZZ \\ \gamma\ne 0}}
              \left(\frac{1}{z-\gamma}+\frac{1}{\gamma}
              +\frac{z}{\gamma^2}\right), \\
\PP(z;\tau)&:=\frac{1}{z^2}+\sum
        _{\substack{\gamma\in \ZZ\tau+\ZZ \\ \gamma\ne 0}}
              \left(\frac{1}{(z-\gamma)^2}-\frac{1}{\gamma^2}\right).
\end{align*}
It is known that these functions have the following expansions
at $z=0$:
\begin{align*}
\log\sigma(z;\tau)&=\log z
        -\sum_{n=2}^{\infty}\frac{1}{2n}E_{2n}(\tau)z^{2n}, \\
\zeta(z;\tau)&=\frac{1}{z}
        -\sum_{n=2}^{\infty}E_{2n}(\tau)z^{2n-1}, \\
\PP(z;\tau)&=\frac{1}{z^2}
        +\sum_{n=2}^{\infty}(2n-1)E_{2n}(\tau)z^{2n-2}
\end{align*}
where $E_{2n}(\tau)$ is the Eisenstein series of weight $2n$, namely,
  $$E_{2n}(\tau):=\sum
        _{\substack{\gamma\in \ZZ\tau+\ZZ \\ \gamma\ne 0}}
              \frac{1}{\gamma^{2n}}. $$
It is also known that $E_{2n}(\tau)$ have the following expansion:
\begin{equation*}
\begin{split}
E_{2n}(\tau)
      &=2\zeta(2n)
        +\frac{2(2\pi i)^{2n}}{(2n-1)!}
        \sum_{j=1}^{\infty}\sum_{k=1}^{\infty}k^{2n-1}q^{kj}\\
      &=2\zeta(2n)
        +\frac{2(2\pi i)^{2n}}{(2n-1)!}\sum_{k=1}^{\infty}k^{2n-1}
        \frac{q^k}{1-q^k} \\
      &=2\zeta(2n)
        +\frac{2(2\pi i)^{2n}}{(2n-1)!}\sum_{k=1}^{\infty}\sigma_{2n-1}(k)q^k
        \ \ (\sigma_{\ell}(k)=\sum_{d|k}d^{\ell})
\end{split}
\end{equation*}
where $\zeta(z)$ denotes the Riemann zeta function, and it holds that
\begin{equation*}
        2\zeta(2n)=-\frac{(2\pi i)^{2n}B_{2n}}{(2n)!}. \\
\end{equation*}

Now it is easy to see that these functions
have the following relation:
\begin{equation*}
\frac{\partial\log\sigma(z;\tau)}{\partial z}=\zeta(z;\tau),\ \
\frac{\partial\,\zeta(z;\tau)}{\partial z}=-\PP(z;\tau).
\end{equation*}
The function $\zeta(z;\tau)$ is subject to the following identities
(\cite[p.\ 84]{WA1}):
\begin{equation}\label{eqn4.1}
  \zeta(z+1;\tau)=\zeta(z;\tau)+E_{2}(\tau),\ \
  \zeta(z+\tau;\tau)=\zeta(z;\tau)+E_{2}(\tau)\tau-2\pi i.
\end{equation}

Next we express the elliptic Bernoulli functions of lower degrees
in terms of the Weierstrass elliptic functions and the
Eisenstein series.

\begin{lem}\label{lem4.1}
For sufficiently small real numbers $x$, $y$ and $\tau\in\HH$,
it holds that
\begin{align}
B_1(x,y;\tau) \label{eqn4.2}
  &=-\frac{1}{2\pi i}\left[
   \frac{1}{x-y\tau}
    -\sum_{n=1}^{\infty}E_{2n}(\tau)(x-y\tau)^{2n-1}
    \right]+y \\ \label{eqn4.3}
  &=-\frac{1}{2\pi i}\left[
    \zeta(x-y\tau;\tau)-E_2(\tau)(x-y\tau)
    \right]+y, \\ \label{eqn4.4}
B_2(x,0;\tau)
  &=-\frac{1}{\pi i}
    \left[\sum_{n=1}^{\infty}
    \frac{1}{2n}
    \frac{\partial E_{2n}(\tau)}
    {\partial \tau}x^{2n}
    +\frac{1}{2\pi i}E_2(\tau)
    \right] \\ \label{eqn4.5}
  &=\frac{1}{2\pi i}
    \left[2\frac{\partial\log\sigma(x;\tau)}{\partial \tau}
    -\frac{\partial E_2(\tau)}{\partial \tau}x^2
    -\frac{1}{\pi i}E_2(\tau)
    \right], \\ \label{eqn4.6}
\frac{\partial B_1(x,0;\tau)}{\partial x}
  &=\frac{1}{2\pi i}\left[
    \frac{1}{x^2}
        +\sum_{n=1}^{\infty}(2n-1)E_{2n}(\tau)x^{2n-2}
    \right] \\ \label{eqn4.7}
  &=\frac{1}{2\pi i}\left[
    \PP(x;\tau)+E_2(\tau)
    \right].
\end{align}
\end{lem}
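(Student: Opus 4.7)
The plan is to reduce the lemma to two Fourier-type computations.

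First, observe that the three displayed pairs of identities are pairwise equivalent via the Laurent expansions of $\zeta(z;\tau)$, $\log\sigma(z;\tau)$, and $\wp(z;\tau)$ at $z=0$ collected at the start of Section \ref{sect4}: $\eqref{eqn4.3}\Leftrightarrow\eqref{eqn4.2}$ (the $n=1$ term of \eqref{eqn4.2} matches the explicit $-E_2(\tau)(x-y\tau)$ correction in \eqref{eqn4.3}, since $\zeta$'s Laurent expansion starts only at $n=2$), $\eqref{eqn4.5}\Leftrightarrow\eqref{eqn4.4}$ (similarly, the $-\partial_\tau E_2(\tau)\,x^2$ term supplies the missing $n=1$ piece), and $\eqref{eqn4.7}\Leftrightarrow\eqref{eqn4.6}$ (via the $\wp$-expansion). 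Moreover, specializing \eqref{eqn4.3} to $y=0$ and differentiating in $x$, using $\zeta'(z;\tau)=-\wp(z;\tau)$, delivers \eqref{eqn4.7}. Hence everything reduces to establishing the two Laurent expansions \eqref{eqn4.2} and \eqref{eqn4.4}.

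For \eqref{eqn4.2}, I would substitute $m=1$ into the $q$-expansion \eqref{eqn2.1}. In each of the two $j$-indexed sums, expand the summand $1/(\mathbf{e}(\mp x)-\mathbf{e}(\mp y\tau)q^j)$ as a geometric series in $\mathbf{e}(\pm y\tau)q^j$, interchange the order of summation, and collapse $\sum_{j\geq 1}q^{jk}=q^k/(1-q^k)$. The antisymmetric combination of the two sums becomes a sine series in $x-y\tau$ whose Taylor coefficients involve $\sum_{k\geq 1}k^{2n-1}q^k/(1-q^k)$; by the $q$-expansion of $E_{2n}(\tau)$ collected in Section \ref{sect4}, this inner sum equals $\frac{(2n-1)!}{2(2\pi i)^{2n}}[E_{2n}(\tau)-2\zeta(2n)]$. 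Separately, the remaining term $\mathbf{e}(-x+y\tau)/(\mathbf{e}(-x+y\tau)-1)$ of \eqref{eqn2.1} is expanded as a Laurent series in $x-y\tau$ via the generating identity $t/(e^t-1)=\sum_{k\geq 0}B_k t^k/k!$. Adding $B_1(y)=y-\tfrac12$, the constant contributions assemble to the $+y$ of \eqref{eqn4.2}, while the Bernoulli-number contributions produced by the $t/(e^t-1)$-expansion cancel exactly against the $-2\zeta(2n)$ pieces through the identity $2\zeta(2n)=-(2\pi i)^{2n}B_{2n}/(2n)!$. What remains is \eqref{eqn4.2}.

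For \eqref{eqn4.4}, the analogous unpacking of \eqref{eqn2.1} at $m=2$, $y=0$ produces, after $j$-summation, double sums of the form $\sum_{k\geq 1}k^{2n}q^k/(1-q^k)^2$ multiplied by the Taylor coefficients of $\cos(2\pi kx)$. Using the identity $\partial_\tau(q^k/(1-q^k))=2\pi i k\cdot q^k/(1-q^k)^2$, together with the $q$-expansion of $E_{2n}(\tau)$, each such inner sum becomes $\frac{(2n-1)!}{2(2\pi i)^{2n+1}}\partial_\tau E_{2n}(\tau)$, producing the $\partial_\tau E_{2n}$-series of \eqref{eqn4.4}. The $n=0$ contribution yields the isolated $\tfrac{1}{2\pi i}E_2(\tau)$ piece, and the Bernoulli constant $B_2(0)=1/6$ cancels the $-2\zeta(2)/(2\pi i)^2=-1/6$ coming from the $n=0$ term. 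The main obstacle throughout is the combinatorial bookkeeping: tracking the various powers of $2\pi i$ and signs produced by $i^{2n}=(-1)^n$, and isolating the cancellation between the Bernoulli-number contributions and the $2\zeta(2n)$ constants of $E_{2n}$. Once these cancellations are made explicit, both identities follow mechanically.
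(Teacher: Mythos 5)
Your proposal follows essentially the same route as the paper's own proof: substitute $m=1,2$ into the expansion \eqref{eqn2.1}, expand the geometric series and interchange summation, resum against the $q$-expansions of $E_{2n}(\tau)$ and $\partial E_{2n}(\tau)/\partial\tau$ using $2\zeta(2n)=-(2\pi i)^{2n}B_{2n}/(2n)!$, and then identify the resulting Laurent series with $\zeta$, $\log\sigma$ and $\wp$ via their expansions at the origin. (Only a cosmetic slip in the bookkeeping: the constant that cancels $B_2(0)=1/6$ is $4\zeta(2)/(2\pi i)^2=-1/6$, not $-2\zeta(2)/(2\pi i)^2$.)
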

\begin{proof}
The proof is based upon the expansion \eqref{eqn2.1}
and direct calculations:
{\allowdisplaybreaks
\begin{align*}
B_1&(x,y;\tau) \\
  &=
  \sum_{j=1}^{\infty}\frac{\be(-y\tau)q^j}{\be(-x)-\be(-y\tau)q^j}
  -\sum_{j=1}^{\infty}\frac{\be(y\tau)q^j}{\be(x)-\be(y\tau)q^j}
  +\frac{\be(-x+y\tau)}{\be(-x+y\tau)-1}
  +B_1(y) \\
  &=\sum_{j=1}^{\infty}\sum_{k=1}^{\infty}
    \left[\be(k(x-y\tau))q^{kj}-\be(-k(x-y\tau))q^{kj}\right] \\
  &\ccccsp  -\frac{1}{2\pi i(x-y\tau)}\frac{2\pi i(x-y\tau)}{\be(x-y\tau)-1}+B_1+y \\
  &=2\sum_{\substack{n=0 \\ n\ \text{odd}}}^{\infty}
    \frac{1}{n!}\left[\sum_{j=1}^{\infty}\sum_{k=1}^{\infty}
    k^nq^{kj}\right](2\pi i(x-y\tau))^n
    -\sum_{\substack{n=0 \\ n\ \text{even}}}^{\infty}
    \frac{B_n}{n!}(2\pi i(x-y\tau))^{n-1}+y \\
  &=-\frac{1}{2\pi i(x-y\tau)} \\
    &\csp\ \ \ +\sum_{n=1}^{\infty}
    \left[-\frac{B_{2n}}{(2n)!}+\frac{2}{(2n-1)!}
      \sum_{j=1}^{\infty}\sum_{k=1}^{\infty}
    k^{2n-1}q^{kj}\right](2\pi i(x-y\tau))^{2n-1}+y \\
  &=-\frac{1}{2\pi i(x-y\tau)}
        +\frac{1}{2\pi i}\sum_{n=1}^{\infty}E_{2n}(\tau)(x-y\tau)^{2n-1}+y \\
  &=-\frac{1}{2\pi i}\left[\zeta(x-y\tau;\tau)-E_2(\tau)(x-y\tau)\right]+y,
\end{align*}
\begin{align*}
B_2(x,0;\tau)
  &=2\left[
  \sum_{j=1}^{\infty}(-j)\frac{q^j}{\be(-x)-q^j}
  -\sum_{j=1}^{\infty}j\frac{q^j}{\be(x)-q^j}
  \right]+B_2 \\
  &=-2\sum_{j=1}^{\infty}\sum_{k=1}^{\infty}
    j\left[\be(kx)q^{kj}+\be(-kx)q^{kj}\right]
    +B_2 \\
  &=-4\sum_{\substack{n=0 \\ n\ \text{even}}}^{\infty}
    \frac{1}{n!}\left[\sum_{j=1}^{\infty}\sum_{k=1}^{\infty}
    jk^nq^{kj}\right](2\pi ix)^n+B_2 \\
  &=-4\sum_{n=1}^{\infty}
    \frac{1}{(2n)!}\left[\sum_{j=1}^{\infty}\sum_{k=1}^{\infty}
    jk^{2n}q^{kj}\right](2\pi ix)^{2n}
    -4\sum_{j=1}^{\infty}\frac{jq^j}{1-q^j}+B_2 \\
  &=-\frac{1}{\pi i}\sum_{n=1}^{\infty}
    \frac{1}{2n}
    \frac{\partial E_{2n}(\tau)}
    {\partial \tau}x^{2n}
    -\frac{2}{(2\pi i)^2}E_2(\tau) \\
  &=\frac{1}{2\pi i}
    \left[2\frac{\partial\log\sigma(x;\tau)}{\partial \tau}
    -\frac{\partial E_2(\tau)}{\partial \tau}x^2
    -\frac{1}{\pi i}E_2(\tau)
    \right], \\
\frac{\partial B_1(x,0;\tau)}{\partial x}
  &=\frac{1}{2\pi i}\left[
    \frac{1}{x^2}
        +\sum_{n=1}^{\infty}(2n-1)E_{2n}(\tau)x^{2n-2}
    \right] \\
  &=\frac{1}{2\pi i}\left[
    \PP(x;\tau)+E_2(\tau)
    \right].
\end{align*}
}
These give the identities from \eqref{eqn4.2} to \eqref{eqn4.7}.
\end{proof}

\section{Proofs of Theorems \ref{thm1.1} and \ref{thm1.3}}
\label{sect5}
In this section we give proofs of Theorems \ref{thm1.1} and \ref{thm1.3}.

\begin{proof}[Proof of Theorem \ref{thm1.3}]
For $\lambda,\mu$ such that $(\lambda,\mu)\ne (0,0)$, the identity
\eqref{eqn4.3} gives
\begin{align*}
  B_1\left(\frac{\lambda}{p}-x,-\frac{\mu}{p};\tau\right)
  &=-\frac{1}{2\pi i}\left[
    \zeta\left(
    \frac{\lambda+\mu\tau}{p}-x;\tau\right)
  -E_2(\tau)\left(\frac{\lambda+\mu\tau}{p}-x\right)
    +2\pi i\frac{\mu}{p}\right], \\
  B_1\left(\frac{q\lambda}{p},-\frac{q\mu}{p};\tau\right)
  &=-\frac{1}{2\pi i}\left[
    \zeta\left(
    \frac{q(\lambda+\mu\tau)}{p};\tau\right)
  -E_2(\tau)\frac{q(\lambda+\mu\tau)}{p}
    +2\pi i\frac{q\mu}{p}\right].
\end{align*}
Hence we have
\begin{equation}\label{eqn5.1}
\end{equation}
{\allowdisplaybreaks
\begin{align*}
    \frac{1}{p}&\sum_{\substack{\lambda,\mu=0\\ (\lambda,\mu)\ne (0,0)}}^{p-1}
    B_1\left(\frac{\lambda}{p}-x,\frac{\mu}{p};\tau\right)
    B_1\left(\frac{q\lambda}{p},\frac{q\mu}{p};\tau\right) \\
  &=\frac{1}{p}\sum_{\substack{\lambda,\mu=0\\ (\lambda,\mu)\ne (0,0)}}^{p-1}
    B_1\left(\frac{\lambda}{p}-x,-\frac{\mu}{p};\tau\right)
    B_1\left(\frac{q\lambda}{p},-\frac{q\mu}{p};\tau\right) \\
  &\ccsp\text{(since $B_1(x,y+1;\tau)=B_1(x,y;\tau)$)} \\
  &=\frac{1}{(2\pi i)^2p}
    \sum_{\substack{\lambda,\mu=0\\ (\lambda,\mu)\ne (0,0)}}^{p-1}
  \left[
    \zeta\left(
    \frac{\lambda+\mu\tau}{p}-s;\tau\right)
  -E_2(\tau)\left(\frac{\lambda+\mu\tau}{p}-s\right)
    +2\pi i\frac{\mu}{p}\right] \\
  &\ccsp\ccsp \times\left[
    \zeta\left(
    \frac{q(\lambda+\mu\tau)}{p};\tau\right)
  -E_2(\tau)\frac{q(\lambda+\mu\tau)}{p}
    +2\pi i\frac{q\mu}{p}\right] \\
  &=D^-(p,q;\tau;x).
\end{align*}
}

Moreover the identities \eqref{eqn4.3}, \eqref{eqn4.5}
and \eqref{eqn4.7} give us
\begin{equation}\label{eqn5.2}
\begin{split}
  -B_1(px,0;\tau)&B_1(qx,0;\tau)
    +\frac{q}{2p}B_2(px,0;\tau)
    +\frac{p}{2q}B_2(qx,0;\tau)
  +\frac{1}{2\pi ipq}\frac{\partial B_1(x,0;\tau)}{\partial x} \\
  =&-\frac{1}{(2\pi i)^2}
     \left[
      \zeta(px;\tau)-E_2(\tau)(px)
     \right]
     \left[
      \zeta(qx;\tau)-E_2(\tau)(qx)
     \right] \\
  &+\frac{q}{4\pi ip}
  \left[
  2\frac{\partial\log\sigma(px;\tau)}{\partial \tau}
  -\frac{\partial E_2(\tau)}{\partial \tau}(px)^2
  -\frac{1}{\pi i}E_2(\tau)
  \right] \\
  &+\frac{p}{4\pi iq}
  \left[
  2\frac{\partial\log\sigma(qx;\tau)}{\partial \tau}
  -\frac{\partial E_2(\tau)}{\partial \tau}(qx)^2
  -\frac{1}{\pi i}E_2(\tau)
  \right] \\
  &+\frac{1}{2\pi ipq}\frac{1}{2\pi i}
  \left[
  \PP(x;\tau)+E_2(\tau)
  \right] \\
  =&R^-(p,q;\tau;x).
\end{split}
\end{equation}

Now the identities \eqref{eqn5.1} and \eqref{eqn5.2}
together with Proposition \ref{prop3.1} give (2) of Theorem \ref{thm1.3}.

The assertion (1) follows easily from \eqref{eqn4.1} and the fact
that $\zeta(z;\tau)$ is an odd function with respect to $z$.
This completes the proof.
\end{proof}

Next we give a proof of Theorem \ref{thm1.1}.
\begin{proof}[Proof of Theorem \ref{thm1.1}]
Let us consider the Taylor expansion of
\begin{align*}
  D^-(p,q;&\tau;x) \\
  =&\frac{1}{(2\pi i)^2p}
    \sum_{\substack{\lambda,\mu=0\\ (\lambda,\mu)\ne (0,0)}}^{p-1}
  \left[
    \zeta\left(
    \frac{\lambda+\mu\tau}{p}-x;\tau\right)
  -E_2(\tau)\left(\frac{\lambda+\mu\tau}{p}-x\right)
    +2\pi i\frac{\mu}{p}\right] \\
  &\ccsp\ccsp \times\left[
    \zeta\left(
    \frac{q(\lambda+\mu\tau)}{p};\tau\right)
  -E_2(\tau)\frac{q(\lambda+\mu\tau)}{p}
    +2\pi i\frac{q\mu}{p}\right]
\end{align*}
at $x=0$.
Then we see that the coefficient of $x^{2n}$ in this expansion is
equal to
\begin{align*}
  &\frac{1}{(2\pi i)^2p(2n)!}
    \sum_{\substack{\lambda,\mu=0\\ (\lambda,\mu)\ne (0,0)}}^{p-1}
    \zeta^{(2n)}\left(
    \frac{\lambda+\mu\tau}{p};\tau\right) \\
  &\cccsp\ccsp\times\left[
    \zeta\left(
    \frac{q(\lambda+\mu\tau)}{p};\tau\right)
  -E_2(\tau)\frac{q(\lambda+\mu\tau)}{p}
    +2\pi i\frac{q\mu}{p}\right].
\end{align*}
This is nothing but $D_{2n}^-(p,q;\tau)$.

Next, applying \eqref{eqn4.2}, \eqref{eqn4.4} and \eqref{eqn4.6},
we expand $R^-(p,q;\tau;x)$ as follows:
{\allowdisplaybreaks
\begin{align*}
  R^-(p,q;\tau;x)
  =&-\frac{1}{(2\pi i)^2}
     \left[
      \zeta(px;\tau)-E_2(\tau)(px)
     \right]
     \left[
      \zeta(qx;\tau)-E_2(\tau)(qx)
     \right] \\
  &+\frac{q}{4\pi ip}
  \left[
  2\frac{\partial\log\sigma(px;\tau)}{\partial \tau}
  -\frac{\partial E_2(\tau)}{\partial \tau}(px)^2
  -\frac{1}{\pi i}E_2(\tau)
  \right] \\
  &+\frac{p}{4\pi iq}
  \left[
  2\frac{\partial\log\sigma(qx;\tau)}{\partial \tau}
  -\frac{\partial E_2(\tau)}{\partial \tau}(qx)^2
  -\frac{1}{\pi i}E_2(\tau)
  \right] \\
  &+\frac{1}{2\pi ipq}\frac{1}{2\pi i}
  \left[
  \PP(x;\tau)+E_2(\tau)
  \right] \\
  =&-\frac{1}{(2\pi i)^2}\left[
        \frac{1}{px}
        -\sum_{n=1}^{\infty}E_{2n}(\tau)(px)^{2n-1}
     \right]
     \left[
        \frac{1}{qx}
        -\sum_{n=1}^{\infty}E_{2n}(\tau)(qx)^{2n-1}
     \right] \\
  &-\frac{q}{2\pi ip}
  \left[
    \sum_{n=1}^{\infty}\frac{1}{2n}
    \frac{\partial E_{2n}(\tau)}{\partial \tau}(px)^{2n}
    +\frac{1}{2\pi i}E_2(\tau)
  \right] \\
  &-\frac{p}{2\pi iq}
  \left[
    \sum_{n=1}^{\infty}\frac{1}{2n}
    \frac{\partial E_{2n}(\tau)}{\partial \tau}(qx)^{2n}
    +\frac{1}{2\pi i}E_2(\tau)
  \right] \\
  &+\frac{1}{2\pi ipq}\frac{1}{2\pi i}
  \left[
    \frac{1}{x^2}+\sum_{n=1}^{\infty}(2n-1)
    E_{2n}(\tau)x^{2n-2}
  \right].
\end{align*}
}
Thus we know that the coefficient of $x^{2n}$ in this expansion is
equal to
\begin{align*}
  &-\frac{1}{(2\pi i)^2}\Bigg[
        \sum_{j=1}^{n}E_{2j}(\tau)E_{2n+2-2j}(\tau)
        p^{2j-1}q^{2n+1-2j}
        -E_{2n+2}(\tau)\frac{p^{2n+2}+q^{2n+2}}{pq}
     \Bigg] \\
  &-\frac{q}{2\pi ip}
    \frac{1}{2n}\frac{\partial E_{2n}(\tau)}{\partial \tau}p^{2n}
    -\frac{p}{2\pi iq}
    \frac{1}{2n}\frac{\partial E_{2n}(\tau)}{\partial \tau}q^{2n}
   +\frac{1}{(2\pi i)^2}(2n+1)E_{2n+2}(\tau)\frac{1}{pq}.
\end{align*}
This is nothing but $R_{2n}^-(p,q;\tau)$.

Hence, from the reciprocity laws (2) in Theorem \ref{thm1.3},
we obtain the reciprocity laws (2) in Theorem \ref{thm1.1}.

The assertion (1) easily follows from that of Theorem \ref{thm1.3}.
This completes the proof.
\end{proof}

\begin{rem}\label{rem5.1}
{\rm
A direct calculation shows that
\begin{equation*}
  \lim_{\tau\to i\infty}R_{w}^-(p,q;\tau)=
  \frac{2(2\pi i)^{w}}{w!}g_w(p,q).
\end{equation*}
>From this and \eqref{eqn1.5}, we obtain \eqref{eqn1.6}
which shows that $D_{w}^-(p,q;\tau)$ is an elliptic analogue
of the Apostol-Dedekind sum.
}
\end{rem}

\section{A proof of Theorem \ref{thm1.2}}\label{sect6}

In this section we give a proof of Theorem \ref{thm1.2}.
We first set up some notation.
Let $f$ be an element of $S_{w+2}$.
Then $n$th period of $f$, \ $r_{n}(f)$,\  is defined by
\begin{equation*}
  r_{n}(f):=\int_{0}^{i\infty}f(z)z^{n}dz
             \ \ (n=0,1,\ldots,w).
\end{equation*}
Furthermore, the period polynomial $r(f)$
and the odd period polynomial $r^-(f)$ of $f$ in the variables $p$ and $q$
is defined by
  $$r(f)(p,q):=\int_{0}^{i\infty}f(z)(pz-q)^{w}dz \text{\ \ \ and\ \ \ }
  r^-(f)(p,q):=\frac12[r(f)(p,q)-r(f)(p,-q)].$$
It is clear that $r^-(f)(p,q)$ has the following expression:
  $$r^-(f)(p,q)=-\sum_{\substack{n=0 \\ \text{$n$ odd}}}^{w}
    \binom{w}{n}r_{w-n}(f)p^{w-n}q^{n}.$$
Here and hereafter $\binom{w}{n}$ denotes a binomial coefficient.

Let $G_{2n}$ be
a normalized Eisenstein series:
\begin{equation*}
  G_{2n}(\tau)
    :=-\frac{B_{2n}}{4n}
      +\sum_{k=1}^{\infty}\sigma_{2n-1}(k)q^k.
\end{equation*}
Notice that
  $$E_{2n}=\frac{2(2\pi i)^{2n}}{(2n-1)!}G_{2n}.$$

To prove Theorem \ref{thm1.2}, we need the following lemma.
\begin{lem}\label{lem6.1}
Set $w=2n$ and let $\{f_j\}_{j=1}^{d_w}$ be a basis
of normalized eigenforms of $S_{w+2}$.
Then it holds that
\begin{equation}\label{eqn6.1}
\begin{split}
\sum_{j=1}^{n}&\left[G_{2j}G_{2n+2-2j}
  +(\delta_{j,1}+\delta_{j,n})
    \frac{1}{8\pi in}\frac{\partial G_{2n}}{\partial\tau}
  +\frac{1}{2}\frac{B_{2j}}{2j}\frac{B_{2n+2-2j}}{2n+2-2j}
    \frac{2n+2}{B_{2n+2}}
  G_{2n+2}\right] \\
  &\ccccsp\times\binom{2n}{2j-1}p^{2j-1}q^{2n+1-2j}\\
  &\ccsp=-\frac{1}{(2i)^{2n+1}}
    \sum_{j=1}^{d_w}\frac{r_{2n}(f_j)r^{-}(f_j)(p,q)}{(f_j,f_j)}f_j
\end{split}
\end{equation}
where  $\delta_{i,j}$ is the Kronecker delta symbol,
 and
$(f,g)$ denotes the Petersson inner product of $f$ and $g$.
\end{lem}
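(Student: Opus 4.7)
The plan is to verify the identity monomial by monomial in $p^{2j-1}q^{2n+1-2j}$ for $1\le j\le n$. Using the functional equation $r_s(f) = (-1)^{s+1}r_{w-s}(f)$ (which is forced by the transformation of $f\in S_{w+2}$ under $\tau\mapsto -1/\tau$), the explicit form
$$r^{-}(f)(p,q) = -\sum_{j=1}^{n}\binom{2n}{2j-1}r_{2j-1}(f)\,p^{2j-1}q^{2n+1-2j}$$
pins down the right-hand side coefficient of each such monomial, and the identity reduces to showing that for each $1\le j\le n$,
$$H_j := G_{2j}G_{2n+2-2j} + (\delta_{j,1}+\delta_{j,n})\frac{1}{8\pi in}\frac{\partial G_{2n}}{\partial\tau} + \frac{1}{2}\frac{B_{2j}}{2j}\frac{B_{2n+2-2j}}{2n+2-2j}\frac{2n+2}{B_{2n+2}}G_{2n+2}$$
coincides with $\frac{1}{(2i)^{2n+1}}\sum_{\ell=1}^{d_w}\frac{r_{2n}(f_\ell)r_{2j-1}(f_\ell)}{(f_\ell,f_\ell)}f_\ell$. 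Both sides will be shown to lie in $S_{w+2}$, after which equality is forced by matching Petersson inner products against each normalized eigenform.

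The first substantive step is to verify $H_j\in S_{w+2}$. For $2\le j\le n-1$, the product $G_{2j}G_{2n+2-2j}$ is already in $M_{w+2}$, and the coefficient of $G_{2n+2}$ is exactly the one that annihilates its constant term at $i\infty$, a routine identity among Bernoulli numbers. In the endpoint cases $j=1$ or $j=n$ the product $G_2G_{2n}$ is merely quasi-modular, but Serre's differential identity $\vartheta G_{2n}:=\frac{1}{2\pi i}\frac{\partial G_{2n}}{\partial\tau}-\frac{n}{6}E_2 G_{2n}\in M_{w+2}$, together with the proportionality between $E_2$ and $G_2$, yields that $G_2G_{2n}+\frac{1}{8\pi in}\frac{\partial G_{2n}}{\partial\tau}$ is modular of weight $w+2$; the added $G_{2n+2}$ scalar then once more kills the constant term.

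Since $(G_{2n+2},f_\ell)=0$ and the right-hand side pairs against $f_\ell$ to give $r_{2n}(f_\ell)r_{2j-1}(f_\ell)/(2i)^{2n+1}$, the remaining task is to compute $(G_{2j}G_{2n+2-2j},f_\ell)$ and (in the endpoint cases) the analogous inner product for the modularised combination. I would invoke the Rankin--Selberg unfolding: writing one Eisenstein factor as a Poincar\'e series and unfolding the integral against $\Gamma\backslash\HH$ reduces the pairing to a Mellin transform of a Rankin convolution of Fourier coefficients, which evaluates to a product of two critical $L$-values of $f_\ell$; the Mellin identity $L(f_\ell,s)=(2\pi)^s r_{s-1}(f_\ell)/\Gamma(s)$ then converts these $L$-values to the desired period product. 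The remaining bookkeeping of constants is mechanical once one tracks the conversion $E_{2k}=\frac{2(2\pi i)^{2k}}{(2k-1)!}G_{2k}$ between the paper's normalisations.

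The main obstacle is the endpoint cases $j=1,n$, where the Rankin--Selberg unfolding must be applied not to $G_2G_{2n}$ itself but to the combination $G_2G_{2n}+\frac{1}{8\pi in}\frac{\partial G_{2n}}{\partial\tau}$. The cleanest route is to use a Ramanujan-type differential identity to rewrite this modular combination as a scalar multiple of $G_{2n+2}$ plus a genuine cusp form, compute the Petersson pairing of each summand against $f_\ell$ separately, and verify that the two contributions assemble into the same formula $r_{2n}(f_\ell)r_{2j-1}(f_\ell)/(2i)^{2n+1}$ obtained for interior $j$, producing a uniform identity valid for all $1\le j\le n$.
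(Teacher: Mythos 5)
Your proposal is correct and follows essentially the same route as the paper's proof: both sides are shown to be cusp forms of weight $w+2$, and equality is forced by matching Petersson inner products against the eigenform basis, the decisive input being Rankin's identity
$\left(f,\ G_{2j}G_{2n+2-2j}+(\delta_{j,1}+\delta_{j,n})\frac{1}{8\pi i n}\frac{\partial G_{2n}}{\partial\tau}\right)=\frac{1}{(2i)^{2n+1}}r_{2n}(f)r_{2j-1}(f)$
together with $(f,G_{2n+2})=0$. The only divergence is that the paper simply cites this identity from Kohnen--Zagier, whereas you propose to rederive it by Rankin--Selberg unfolding (including the quasi-modular endpoint cases $j=1,n$), which is more self-contained but leaves the constant-tracking, e.g.\ the factor $i^{-s}$ in $L(f,s)=\frac{(2\pi)^s}{\Gamma(s)}i^{-s}r_{s-1}(f)$, unexecuted.
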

\begin{proof}
We use the following Rankin's identity
(refer to Kohnen-Zagier \cite{KZ1} noting that their notation of $r_n(f)$
differs from ours by a factor $i^{n+1}$):
for a normalized eigenform $f$ of $S_{w+2}$,
\begin{equation}\label{eqn6.2}
\left(f,G_{2j}G_{2n+2-2j}
  +(\delta_{j,1}+\delta_{j,n})
    \frac{1}{8\pi in}\frac{\partial G_{2n}}{\partial\tau}\right)
=\frac{1}{(2i)^{2n+1}}r_{2n}(f)r_{2j-1}(f)
\end{equation}
where $j=1,2,\ldots,n$.

We set $g$ and $h$ to be the left and right hand sides of \eqref{eqn6.1},
respectively. We note that both $g$ and $h$ are cusp forms of $S_{w+2}$.
Then, for any $f_l\ (l=1,2,\ldots,d_w)$, we have
{\allowdisplaybreaks
\begin{align*}
(f_l,g)
&=\sum_{j=1}^{n}\left(f_l,G_{2j}G_{2n+2-2j}
  +(\delta_{j,1}+\delta_{j,n})
    \frac{1}{8\pi in}\frac{\partial G_{2n}}{\partial\tau}
    \right)\binom{2n}{2j-1}p^{2j-1}q^{2n+1-2j} \\
&\ccsp\text{(\,since $(f_l,G_{2n+2})=0$\,)} \\
&=\sum_{j=1}^{n}
  \frac{1}{(2i)^{2n+1}}r_{2n}(f_l)r_{2j-1}(f_l)
    \binom{2n}{2j-1}p^{2j-1}q^{2n+1-2j} \\
&\ccsp\text{(\,by \eqref{eqn6.2}\,)} \\
&=-\frac{1}{(2i)^{2n+1}}r_{2n}(f_l)r^-(f_l)(p,q) \\
&=(f_l,h).
\end{align*}
}
This implies \eqref{eqn6.1}.
\end{proof}

Now we are ready to prove Theorem \ref{thm1.2}.
\begin{proof}[Proof of Theorem \ref{thm1.2}]
Set $w=2n$ and let $\{f_j\}_{j=1}^{d_w}$ be a basis
of normalized eigenforms of $S_{w+2}$.

We use the formulas (\cite[pp.\ 453--454]{ZA1})
\begin{align*}
  r_{2n}(G_{2n+2})&=\frac{(2n)!\zeta(2n+1)}{2(2\pi i)^{2n+1}},
  \\
  (G_{2n+2},G_{2n+2})&=\frac{(2n)!}{(4\pi)^{2n+1}}
    \frac{B_{2n+2}}{2(2n+2)}\zeta(2n+1)
\end{align*}
and the formula (\cite{FU2})
  $$r^-(G_{2n+2})(p,q)=
    -\frac{1}{pq}
    \left\{
    \sum^{n+1}_{j=0}
    \frac{(2n)!B_{2j}B_{2n+2-2j}}{2(2j)!(2n+2-2j)!}p^{2j}q^{2n+2-2j}
    +\frac{B_{2n+2}}{4(n+1)}
    \right\},
  $$
to reformulate $R_{2n}^-(p,q;\tau)$ as follows:
\begin{equation}\label{eqn6.3}
\end{equation}
{\allowdisplaybreaks
\begin{align*}
  R_{2n}^-&(p,q;\tau) \\
  =&-\frac{1}{(2\pi i)^2pq}\Bigg[
        \sum_{j=1}^{n}E_{2j}(\tau)E_{2n+2-2j}(\tau)
        p^{2j}q^{2n+2-2j} \\
        &\ccsp\csp-E_{2n+2}(\tau)(p^{2n+2}+q^{2n+2})
        -(2n+1)E_{2n+2}(\tau)
     \Bigg] \\
  &-\frac{1}{4\pi in}\frac{\partial E_{2n}(\tau)}{\partial \tau}
    (p^{2n-1}q+pq^{2n-1}) \\
  =&-\frac{4(2\pi i)^{2n+2}}{(2\pi i)^2(2n)!}
        \sum_{j=1}^{n}
        \left[
        G_{2j}(\tau)G_{2n+2-2j}(\tau)
        +(\delta_{j,1}+\delta_{j,n})
        \frac{1}{8\pi in}
        \frac{\partial G_{2n}(\tau)}{\partial \tau}
        \right]
     \\
  &\cccsp\times\binom{2n}{2j-1}p^{2j-1}q^{2n+1-2j}
     \\
  &+\frac{2(2\pi i)^{2n}}{(2n+1)!}
    G_{2n+2}(\tau)\frac{p^{2n+2}+q^{2n+2}}{pq}
    +\frac{2(2\pi i)^{2n}}{(2n)!}
    G_{2n+2}(\tau)\frac{1}{pq} \\
  =&
  \frac{4(2\pi i)^{2n}}{(2n)!}\frac{1}{(2i)^{2n+1}}
    \sum_{j=1}^{d_w}\frac{r_{2n}(f_j)r^{-}(f_j)(p,q)}{(f_j,f_j)}f_j(\tau)
     \\
  &+
  \frac{2(2\pi i)^{2n}}{(2n)!}
    \frac{2n+2}{B_{2n+2}}(2n)!
  \sum_{j=0}^{n+1}
    \Bigg[
    \frac{B_{2j}}{(2j)!}\frac{B_{2n+2-2j}}{(2n+2-2j)!}
    p^{2j-1}q^{2n+1-2j} \\
    &\ccccsp\csp+\frac{2n+1}{(2n+2)!}
    B_{2n+2}\frac{1}{pq}
    \Bigg]G_{2n+2}(\tau) \\
  &\ccsp\text{(by Lemma \ref{lem6.1})} \\
  =&
  \frac{4(2\pi i)^{2n}}{(2n)!}\frac{1}{(2i)^{2n+1}}
    \sum_{j=1}^{d_w}\frac{r_{2n}(f_j)r^{-}(f_j)(p,q)}{(f_j,f_j)}f_j(\tau)
     \\
  &-
  \frac{4(2\pi i)^{2n}}{(2n)!}
  \frac{2n+2}{B_{2n+2}} r^-(G_{2n+2})(p,q)G_{2n+2}(\tau) \\
  =&
  \frac{4(2\pi i)^{2n}}{(2n)!}\frac{1}{(2i)^{2n+1}}
    \sum_{j=1}^{d_w}\frac{r_{2n}(f_j)r^{-}(f_j)(p,q)}{(f_j,f_j)}f_j(\tau)
     \\
  &+\frac{4(2\pi i)^{2n}}{(2n)!}\frac{1}{(2i)^{2n+1}}
    \frac{r_{2n}(G_{2n+2})r^{-}(G_{2n+2})(p,q)}
    {(G_{2n+2},G_{2n+2})}G_{2n+2}(\tau).
\end{align*}
}
By setting
  $$f_0(\tau):=G_{2n+2}(\tau),
  $$
the identity \eqref{eqn6.3} can be rewritten as
\begin{equation}\label{eqn6.4}
  R_{2n}^-(p,q;\tau)=
  -\frac{2i\pi^{2n}}{(2n)!}
    \sum_{j=0}^{d_w}\frac{r_{2n}(f_j)r^{-}(f_j)(p,q)}{(f_j,f_j)}f_j(\tau).
\end{equation}

Now, since $f_j$ $(j=0,1,\cdots,d_w)$ form a basis of
$M_{w+2}$, thus they are linearly independent. Hence
there are $\tau_0,\tau_1,\ldots,\tau_{d_w}\in\HH$ such that
\begin{equation}\label{eqn6.5}
\begin{vmatrix}
  f_0(\tau_0)&f_0(\tau_1)&\cdots&f_0(\tau_{d_w})\\
  f_1(\tau_0)&f_1(\tau_1)&\cdots&f_1(\tau_{d_w})\\
  \cdots&\cdots&\cdots&\cdots \\
  f_{d_w}(\tau_0)&f_{d_w}(\tau_1)&\cdots&f_{d_w}(\tau_{d_w})\\
\end{vmatrix}\ne 0.
\end{equation}

On the other hand, since
$\beta_{w}^-\alpha_{w+2}^-$ is an isomorphism,
and $\beta_{w}^-\alpha_{w+2}^-(f_j)=r^{-}(f_j)$, we can deduce that
  $$\left\{r^{-}(f_j)(p,q)\right\}_{j=0}^{d_w}$$
is a basis of $\mathcal{W}_w^-$.
Therefore, noting that $r_{2n}(f_j)\ne 0$, we know that
  $$\left\{\frac{r_{2n}(f_j)r^{-}(f_j)(p,q)}{(f_j,f_j)}\right\}_{j=0}^{d_w}$$
is also a basis of $\mathcal{W}_w^-$ and,
by \eqref{eqn6.5}, we conclude that
  $$\left\{\sum_{j=0}^{d_w}\frac{r_{2n}(f_j)r^{-}(f_j)(p,q)}{(f_j,f_j)}
  f_j(\tau_i)\right\}_{i=0}^{d_w}$$
is also a basis of $\mathcal{W}_w^-$.
This together with the identity \eqref{eqn6.4} imply that
 $$\{R_{2n}^-(p,q;\tau_i)\}_{i=0}^{d_w}$$
is again a basis of $\mathcal{W}_w^-$.

Finally, from the fact that $\beta_{w}^-$ is an isomorphism,
and that
$\beta_{w}^-(D_{2n}^-(p,q;\tau_i))=R_{2n}^-(p,q;\tau_i)$, we deduce that
 $$\{D_{2n}^-(p,q;\tau_i)\}_{i=0}^{d_w}$$
is a basis of $\mathcal{D}_w^-$. This establishes what we are after.
\end{proof}

\begin{rem}\label{rem6.1}
{\rm
It should be remarked that $R_{2n}^-(p,q;\tau)$ ``generates''
not only odd period polynomials but also modular forms. In other words,
it follows that
there are $(p_i,q_i)\in\CC^2\ (i=0,1,\ldots,d_w)$
such that
$\{R_{2n}^-(p_i,q_i;\tau)\}_{i=0}^{d_w}$ is a base of $M_{2n+2}$.
}
\end{rem}

\section{An application to Eisenstein series identities}\label{sect7}

In this section we will give an application of Theorem
\ref{thm1.1}.
Let $S_{2n}^-(p,q;\tau)$ and $T_{2n}^-(p,q;\tau)$ be defined by
\begin{equation*}
\begin{split}
  S_{2n}^-(p,q;\tau):=&R_{2n}^-(p,q;\tau)
    -\frac{1}{(2\pi i)^2}(2n+1)E_{2n+2}(\tau)\frac{1}{pq} \\
  =&-\frac{1}{(2\pi i)^2}\Bigg[
        \sum_{j=1}^{n}E_{2j}(\tau)E_{2n+2-2j}(\tau)
        p^{2j-1}q^{2n+1-2j} \\
        &\cccsp\ \ \ \ \ -E_{2n+2}(\tau)\frac{p^{2n+2}+q^{2n+2}}{pq}
     \Bigg] \\
  &-\frac{q}{2\pi ip}
    \frac{1}{2n}\frac{\partial E_{2n}(\tau)}{\partial \tau}p^{2n}
    -\frac{p}{2\pi iq}
    \frac{1}{2n}\frac{\partial E_{2n}(\tau)}{\partial \tau}q^{2n},
    \\
  T_{2n}^-(p,q;\tau):=&(2\pi i)^2pqS_{2n}^-(p,q;\tau). \\
\end{split}
\end{equation*}
By Theorem \ref{thm1.1}, $R_{2n}^-(p,q;\tau)$
satisfies the equation \eqref{eqn1.3}.
Since $1/pq$ also satisfies the equation \eqref{eqn1.3},
this can be carried over to $S_{2n}^-(p,q;\tau)$:
\begin{equation*}
S_{2n}^-(p+q,q;\tau)+S_{2n}^-(p,p+q;\tau)=S_{2n}^-(p,q;\tau).
\end{equation*}
Hence it follows that $T_{2n}^-(p,q;\tau)$ satisfies the equation
\begin{equation}\label{eqn7.1}
pT_{2n}^-(p+q,q;\tau)+qT_{2n}^-(p,p+q;\tau)
  =(p+q)T_{2n}^-(p,q;\tau).
\end{equation}

Now we set
$$c_j:=\begin{cases} E_{2n+2}(\tau),\ &j=0,n+1 \\
                    -E_2(\tau)E_{2n}(\tau)-\frac{\pi i}{n}
                    \frac{\partial E_{2n}(\tau)}{\partial \tau},
                    &j=1,n \\
                    -E_{2j}(\tau)E_{2n+2-2j}(\tau), &\text{otherwise}
      \end{cases}$$
so that $T_{2n}^-(p,q;\tau)$ can be expressed as
\begin{equation}\label{eqn7.2}
  T_{2n}^-(p,q;\tau)=\sum_{j=0}^{n+1}c_jp^{2j}q^{2n+2-2j}.
\end{equation}
This gives rise to the following Eisenstein series identities:
\begin{thm}\label{thm7.1}
For positive integers $n$ and $k$ with $1\leq k\leq 2n+2$, it holds that
\begin{equation}\label{eqn7.3}
  \sum_{\substack{i=0 \\ 2i\geq k-1}}^{n+1}
    \binom{2i}{k-1}c_i
  +\sum_{\substack{i=0 \\ 2i\leq k}}^{n+1}
    \binom{2n+2-2i}{2n+2-k}c_i
    =\begin{cases} c_{\frac{k-1}{2}},\ &\text{$k$\ odd} \\
                    c_{\frac{k}{2}},\ &\text{$k$\ even}.
      \end{cases}
\end{equation}
\end{thm}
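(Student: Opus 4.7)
The proof is a direct coefficient-comparison argument using the functional equation \eqref{eqn7.1} together with the explicit polynomial form \eqref{eqn7.2} of $T_{2n}^-(p,q;\tau)$. The plan is simply to expand both sides of \eqref{eqn7.1} in the monomial basis $\{p^k q^{2n+3-k}\}$ and match coefficients; the identity \eqref{eqn7.3} is precisely what falls out for $1\le k\le 2n+2$.

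First I would substitute $T_{2n}^-(p,q;\tau)=\sum_{j=0}^{n+1}c_j p^{2j}q^{2n+2-2j}$ into each of the three terms of \eqref{eqn7.1}. Using the binomial theorem,
\begin{align*}
  pT_{2n}^-(p+q,q;\tau)
  &=\sum_{j=0}^{n+1}c_j\sum_{m=0}^{2j}\binom{2j}{m}p^{m+1}q^{2n+2-m},\\
  qT_{2n}^-(p,p+q;\tau)
  &=\sum_{j=0}^{n+1}c_j\sum_{m=0}^{2n+2-2j}\binom{2n+2-2j}{m}p^{2j+m}q^{2n+3-2j-m},\\
  (p+q)T_{2n}^-(p,q;\tau)
  &=\sum_{j=0}^{n+1}c_j\left(p^{2j+1}q^{2n+2-2j}+p^{2j}q^{2n+3-2j}\right).
\end{align*}

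Next I would read off the coefficient of $p^k q^{2n+3-k}$ on each side. On the left, the first sum contributes via $m=k-1$ (requiring $k\ge 1$ and $2j\ge k-1$), giving $\sum_{2j\ge k-1}\binom{2j}{k-1}c_j$; the second sum contributes via $m=k-2j$ (requiring $2j\le k\le 2n+2$), giving $\sum_{2j\le k}\binom{2n+2-2j}{k-2j}c_j$. On the right, only one term contributes: $c_{(k-1)/2}$ if $k$ is odd and $c_{k/2}$ if $k$ is even. Applying the symmetry $\binom{2n+2-2j}{k-2j}=\binom{2n+2-2j}{2n+2-k}$ rewrites the second sum in exactly the form stated in \eqref{eqn7.3}.

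Finally I would verify that the edge values $k=0$ and $k=2n+3$ yield the trivial identities $c_0=c_0$ and $c_{n+1}=c_{n+1}$, which explains the restriction $1\le k\le 2n+2$ in the theorem. There is no real obstacle here — the only care needed is in tracking the summation ranges and confirming the binomial-coefficient symmetry — so the theorem follows at once from \eqref{eqn7.1}, which itself is a consequence of Theorem \ref{thm1.1}(2) modulo the trivial observation that $1/(pq)$ satisfies \eqref{eqn1.3}.
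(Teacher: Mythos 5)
Your proposal is correct and follows essentially the same route as the paper: substitute the polynomial expression \eqref{eqn7.2} into the three terms of \eqref{eqn7.1}, expand $(p+q)^{2j}$ and $(p+q)^{2n+2-2j}$ by the binomial theorem, and compare coefficients of $p^kq^{2n+3-k}$, with the symmetry $\binom{2n+2-2j}{k-2j}=\binom{2n+2-2j}{2n+2-k}$ giving the stated form. The only difference is your explicit check of the trivial edge cases $k=0$ and $k=2n+3$, which the paper omits.
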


\begin{proof}
>From \eqref{eqn7.1} and  \eqref{eqn7.2}, we have
\begin{align*}
  &p\sum_{i=0}^{n+1}c_i
    \sum_{j=0}^{2i}\binom{2i}{j}p^jq^{2i-j}q^{2n+2-2i}
  +q\sum_{i=0}^{n+1}c_ip^{2i}
    \sum_{j=0}^{2n+2-2i}\binom{2n+2-2i}{j}p^{2n+2-2i-j}q^j \\
  &\ccsp=p\sum_{i=0}^{n+1}c_ip^{2i}q^{2n+2-2i}
   +q\sum_{i=0}^{n+1}c_ip^{2i}q^{2n+2-2i}.
\end{align*}
By comparing the coefficients of $p^kq^{2n+3-k}$ in the both sides
of the equation above, we obtain the identities \eqref{eqn7.3}.
\end{proof}

If we take $k=1$ in Theorem \ref{thm7.1} we rediscover the formulas
  $$\frac{2\pi i}{n}\frac{\partial E_{2n}(\tau)}{\partial \tau}
  =-\sum_{j=1}^{n}E_{2j}(\tau)E_{2n+2-2j}(\tau)+(2n+3)E_{2n+2}(\tau)
  \ \ (n\geq 1)
  $$
which were proved by van der Pol \cite[p.\ 266]{PO1} and
Rankin \cite[Theorem 3]{RA2} (originated with Ramanujan \cite[p.\ 142]{RA1}).
Furthermore, Skoruppa \cite{SK1} discussed a method to produce such
identities for given $n$ and showed the first few of them.
On the other hand, our result \eqref{eqn7.3}
gives explicit formulas for any $n$.

{\bf Note added.} We are informed by Machide that his new result
\cite[Lemma 6.3]{MA2} implies that
$C(\tau)=-E_2(\tau)/(2\pi i)^2pq$ for the constant $C(\tau)$
in Theorem \ref{thm1.3}.

{\bf Acknowledgements.} The author would like to thank the referee
for helpful comments, and especially for providing him with
an elegant proof of Proposition \ref{prop3.1}.
He would also like to thank Professor N.~Yui and Dr. T.~Machide
for valuable comments.

\end{document}